\theoremstyle{definition}
    \newtheorem{theorem}{Theorem}[section]
    \newtheorem{proposition}[theorem]{Proposition}
    \newtheorem{corollary}[theorem]{Corollary}
    \newtheorem{definition}[theorem]{Definition}
    \newtheorem{example}[theorem]{Example}
    \newtheorem{parrafo}[theorem]{{\!}}
\numberwithin{equation}{theorem}
\newcommand{\OO}{{\mathcal {O}}}
\DeclareMathOperator{\Coeff}{Coeff}
\DeclareMathOperator{\Diff}{Diff}
\DeclareMathOperator{\Fc}{{\digamma\! c}}
\DeclareMathOperator{\GrQ}{Gr-\mathbb{Q}}
\DeclareMathOperator{\QPlus}{\mathbb{Q}_{\geq 0}}
\DeclareMathOperator{\QPos}{\mathbb{Q}_{> 0}}
\DeclareMathOperator{\Sing}{Sing}
\DeclareMathOperator{\MaxB}{\underline{\mathbf{Max}}}
\DeclareMathOperator{\ord}{ord}
\DeclareMathOperator{\RPlus}{\mathbb{R}_{\geq 0}}
\DeclareMathOperator{\Spec}{Spec}
\def\dern#1#2#3{\dfrac{\partial^{#3} #1}{\partial {#2}^{#3}}}
\markboth{\today}{\today}
\title{Coefficient and elimination algebras in Resolution of Singularities.}
\author{Roc\'{\i}o Blanco}
\author{Santiago Encinas}
\thanks{Partially supported by MTM2009-07291}
\address{Dep. Matem\'aticas.
Universidad de Castilla la Mancha.
Avenida de los Alfares 42.
16071 Cuenca. Spain.}
\address{Dep. Matem\'atica Aplicada.
Universidad de Valladolid.
Avda. Salamanca s/n.
47014 Valladolid. Spain}
\date{\today}
\dedicatory{ Dedicated to H. Hironaka in his 80th birthday.}
\begin{document}

\maketitle

\section*{Introduction.}

Given a variety \( X \) over a field \( k \) one wants to find a
desingularization, which is a proper and birational morphism \( 
X'\to X \), where \( X' \) is a regular variety and the morphism is 
an isomorphism over the regular points of \( X \).

If \( X \) is embedded in a regular variety \( W \), there is a 
notion of embedded desingularization and related to this is the 
notion of log-resolution of ideals in \( \mathcal{O}_W \).

When the field \( k \) has characteristic zero it is well known that 
the problem of resolution is solved. The first proof of the existence 
of resolution of singularities is due to H. Hironaka in his 
monumental work \cite{Hironaka1964} (see also \cite{Hironaka1977}).

If characteristic of \( k \) is positive the problem of resolution in 
arbitrary dimension is still open. See \cite{Hauser2010} for recent 
advances and obstructions (see also \cite{Hauser2003}).

The proof by Hironaka is existential. There 
are constructive proofs, always in characteristic zero case, see for 
instance \cite{Villamayor1989}, \cite{Villamayor1992}, 
\cite{BierstoneMilman1997}, we refer to \cite{Hauser2003} for a 
complete list of references.
Those constructive proofs give rise to algorithmic resolution of singularities, 
that allows to perform implementation at the computer 
\cite{BodnarSchicho2000b}, \cite{FruhbisPfister2004}.

Recently some techniques have appeared in order to try to prove the 
problem of resolution of singularities in the positive characteristic 
case. Rees algebras seem to be a useful tool in this context.
Hironaka in \cite{Hironaka2003} and \cite{Hironaka2005} propose to 
use Rees algebras for proving log-resolution of ideals. The advantage 
of Rees algebras is that the algebra encodes in one object many 
ideals which are ``equivalent'' for the problem of log-resolution.
Also Rees algebras have a good behavior with respect to integral 
closure, see for instance \cite{Villamayor2008} and 
\cite{Villamayor2007}.
On the other hand Kawanoue and Matsuki \cite{Kawanoue2007}, 
\cite{KawanoueMatsuki2006prep} use a different object, called 
idealistic filtration. Which is similar to Rees algebras but with a 
grading over the real numbers.

In this paper we compare those structures and construct \( \mathbb{Q} 
\)-Rees algebras (\ref{DefQReesAlg}), which are algebras with grading 
over the rational numbers.
We will see that Rees algebras, idealistic filtrations and \( 
\mathbb{Q} \)-Rees algebras encode (up to integral closure) the same 
information (\ref{TresEquiv}).
One motivation to extend Rees algebras to a \( \mathbb{Q} \)-grading 
comes from the scaling operation (\ref{DefScaling}), which is needed 
in the process of resolution of singularities.
Since we restrict to rational numbers all properties related to 
integral closure and finiteness come easily, see \ref{GrQlimit}.

We will use \( \mathbb{Q} \)-Rees algebras in order to construct 
log-resolution of ideals in the characteristic zero case.
Along the paper we have specified which constructions and results are 
valid in general or only in characteristic zero.

Section~\ref{SeccQRA} is devoted to introduce the several algebras we 
have mentioned and to see that they are ``equivalent'' up to integral 
closure.
In section~\ref{SeccSheaves} we extend the notion of \( \mathbb{Q} 
\)-Rees algebra to sheaves and define the order function.

In section~\ref{SeccOperations} we define all operations on algebras: 
integral closure, radical saturation, differential saturation, 
scaling and the division by a hypersurface.
This last operation together with the function \( \ord \), defined for 
any \( \mathbb{Q} \)-Rees algebra, are a key point in order to 
construct a resolution algorithm.
Except integral closure, all operations may be expressed easily in 
terms of generators.

In section~\ref{SeccCoeff} we define coefficient algebra and see its 
properties. The stability by monoidal transformations 
\ref{SeqTransIntersec} is valid in general, but the ``maximal 
contact'' (\ref{ThContMaxLocal}) holds only if characteristic of \( 
k \) is zero.
We also prove, in this setting, theorem~\ref{ThJarek} which was 
proved first by W{\l}odarczyk \cite{Wlodarczyk2005}. This theorem 
ensures that coefficient algebras do not depend on the choice of 
``maximal contact''.

Section~\ref{SeccElimin} defines elimination algebras in the most
direct way, just to allow computations.  See \cite{Villamayor2007} for
a more detailed discussion on elimination algebras and the behavior 
with integral closure.
Elimination algebras fail to have a good stability by monoidal 
transformations, as illustrates example~\ref{EjemElimina}.
On the other hand, if characteristic of \( k \) is zero, then both 
coefficient algebra and elimination algebra are isomorphic 
\ref{CoeffIsoElim} for the \'etale topology.

Finally in section~\ref{SeccAlgorithm} we construct an algorithm of 
resolution of \( \mathbb{Q} \)-Rees algebras which gives a 
log-resolution algorithm.
We see in \ref{EquivSameResol} that we construct the same
resolution for algebras with the same integral closure, see also 
\cite{EncinasVillamayor2007}.
\medskip

The authors want to thank H. Kawanoue for useful suggestions.
We also want to thank to O. Villamayor, A. Bravo, A. Benito,
M. L. Garc\'{\i}a Escamilla, J. Schicho,
H. Hauser and D. Panazzolo for several mathematical
conversations.

\section{$\mathbb{Q}$-Rees algebras.} \label{SeccQRA}

Fix \( R \) to be a noetherian ring over a field \( k \). The 
characteristic of the field \( k \)
is arbitrary, unless specified.

We consider Rees algebras and extend this notion to a suitable
definition of Rees algebras over rational numbers.

\begin{definition} \label{DefReesAlg}
\cite{Villamayor2008}
Let \( R \) be a noetherian ring. Consider the graded algebra:
\begin{equation*}
    R[T]=\bigoplus_{n\in\mathbb{N}}RT^{n}
\end{equation*}
A Rees algebra in \( R \) is a graded subalgebra
\begin{equation*}
    \mathcal{J}=\bigoplus_{n\in\mathbb{N}}J_{n}T^{n}\subset R[T]
\end{equation*}
such that \( J_{0}=R \) and \( \mathcal{J} \) is finitely generated as
\( R \)-algebra.
\end{definition}

Equivalently we may define a Rees algebra as a collection of ideals \( 
\{J_{n}\}_{n\in\mathbb{N}} \) such that:
\begin{enumerate}
    \item  \( J_{0}=R \),

    \item  \( J_{m}J_{n}\subset J_{m+n} \) for any \( m,n\in\mathbb{N} \),
    and

    \item there exist elements \( f_{1},\ldots,f_{r}\in R \) and
    degrees \( n_{1},\ldots,n_{r}\in\mathbb{N} \), with \(
    f_{i}\in J_{n_{i}} \), \( i=1,\ldots,r \), such that for any \( 
    n\in\mathbb{N} \) the ideal
    \( J_{n} \) is generated (as ideal) by the set
    \begin{equation*}
        \{f_{i_{1}}\cdots f_{i_{\ell}}\mid 
	n_{i_{1}}+\cdots+n_{i_{\ell}}=n\}
    \end{equation*}
\end{enumerate}

We will be interested in considering the equivalence class of Rees 
algebras up to integral closure.
\begin{definition} \label{DefIntEquivRA}
Two Rees algebras (\ref{DefReesAlg}) \(
\mathcal{J}_{1},\mathcal{J}_{2}\subset R[T] \) are \emph{equivalent}
if they have the same integral closure as subrings of \( R[T] \).
\end{definition}

\begin{parrafo} \label{ExtraPropRees}
Let \( \mathcal{J}=\oplus_{n}J_{n}T^{n} \) be a Rees algebra.
For any \( n\in\mathbb{N} \) set the ideal
\begin{equation*}
    I_{n}=\sum_{m\geq n}J_{m}
\end{equation*}
It can be checked that \( \mathcal{I}=\oplus_{n}I_{n}T^{n} \) is a 
Rees algebra. In fact if \( \{f_{1}T^{n_{1}},\ldots,f_{r}T^{n_{r}}\} 
\) is a set of generators of the algebra \( \mathcal{J} \) then
\begin{equation*}
    \{f_{i}T^{m}\mid i=1,\ldots,r,\ 0\leq m\leq n_{i}\}
\end{equation*}
is a set of generators of \( \mathcal{I} \).
In fact it can be checked that for any \( n\in\mathbb{N} \)
\begin{equation*}
    I_{n}=\left\langle f_{i_{1}}\cdots f_{i_{\ell}} \mid
    n_{i_{1}}+\cdots +n_{i_{\ell}}\geq n\right\rangle
\end{equation*}

Note that the Rees algebra \( \mathcal{I} \) satisfies the following 
property 
\begin{equation} \label{ExtraPropNRA}
    \text{if } n_{1}\geq n_{2} \text{ then }  I_{n_{1}}\subset I_{n_{2}}
\end{equation}
In fact \( \mathcal{I} \) is the smallest Rees algebra satisfying 
(\ref{ExtraPropNRA}) and such that \( \mathcal{J}\subset\mathcal{I} \).
On the other hand we have that \( \mathcal{J}\subset\mathcal{I} \) is 
a finite extension (i.e. the algebras \( \mathcal{J} \) and \( 
\mathcal{I} \) are equivalent (\ref{DefIntEquivRA})).
In order to prove the last sentence, it is enough to prove that if \( 
f\in J_{n} \) and \( m\leq n \), then \( fT^{m} \) is integral over \( \mathcal{J} \).
We have that
\begin{equation*}
    f\in J_{n} \Longrightarrow f^{m}\in J_{nm} \Longrightarrow 
    f^{n}\in J_{nm}
\end{equation*}
so that the element \( fT^{m} \) fulfills the monic equation \( 
Z^{n}-(f^{n}T^{nm})=0 \).
\end{parrafo}

So that, up to integral closure, condition (\ref{ExtraPropNRA}) may 
be added to our definition of algebras.
\begin{definition} \label{DefNReesAlg}
An \( \mathbb{N} \)-Rees algebra is a Rees algebra \( \mathcal{J} \)
(\ref{DefReesAlg}) satisfying (\ref{ExtraPropNRA}).  Equivalently, an
\( \mathbb{N} \)-Rees algebra is a collection of ideals \(
\{J_{n}\}_{n\in\mathbb{N}} \) such that
\begin{enumerate}
    \item  \( J_{0}=R \),

    \item  \( J_{m}J_{n}\subset J_{mn} \), for any \( m,n\in\mathbb{N}
    \),

    \item  if \( n\leq m \) then \( J_{m}\subset J_{n} \), and

    \item  there exist elements \( f_{1},\ldots,f_{r} \) and degrees \( 
    n_{1},\ldots,n_{r} \), with \(
    f_{i}\in J_{n_{i}} \), \( i=1,\ldots,r \), such that for any \( 
    n\in\mathbb{N} \) the ideal
    \( J_{n} \) is generated (as ideal) by the set
    \begin{equation*}
        \{f_{i_{1}}\cdots f_{i_{\ell}}\mid 
	n_{i_{1}}+\cdots+n_{i_{\ell}}\geq n\}
    \end{equation*}
\end{enumerate}
\end{definition}

\begin{parrafo}
If \( R \) is a regular local ring there is a well-defined notion of order 
of ideals \( J\subset R \), which can be extended to \( \mathbb{N}
\)-Rees algebras (see \ref{DefOrdRA}). In general the order will be a 
rational number and we will be interested in algebras with order one.
Algebras with order one will allow to define inductive procedures in
the process of resolution of singularities.
So that we will need to \emph{normalize} algebras in order to have
order one, this procedure will be called \emph{scaling} and it will
lead naturally to consider graduation over the rational numbers.
\end{parrafo}

\begin{parrafo} \label{GrQlimit}
Set \( \QPlus=\{a\in\mathbb{Q}\mid a\geq 0\} \).
We will consider the \( \mathbb{Q} \)-graded algebra
\begin{equation*}
    \GrQ(R)=\bigoplus_{a\in\QPlus} R T^{a}
\end{equation*}
Note that this graded algebra is the limit of the \( \mathbb{N}
\)-graded algebras \( R[T^{\frac{1}{N}}] \) for \( N\in\mathbb{N} \)
\begin{equation*}
    \GrQ(R)=\bigcup_{N\in\mathbb{N}}R[T^{\frac{1}{N}}]
\end{equation*}
So that every finite set of \( \GrQ(R) \) is included in \(
R[T^{\frac{1}{N}}] \) for \( N \) big enough.
This condition will allow us to use properties of Rees algebras for
\emph{finitely generated} sub-algebras of \( \GrQ(R) \).
\end{parrafo}

\begin{definition} \label{DefQReesAlg}
A \( \mathbb{Q} \)-Rees algebra over \( R \) is a graded subalgebra 
\begin{equation*}
    \mathcal{J}=\bigoplus_{a\in\QPlus} J_{a}T^{a}\subset \GrQ(R)
\end{equation*}
such that the collection of ideals \( \{J_{a}\}_{a\in\QPlus} \)
satisfies the following properties:
\begin{enumerate}
    \item  \( J_{0}=R \),

    \item  \( J_{a}J_{b}\subset J_{a+b} \) for all \( 
    a,b\in\QPlus \),

    \item  \( J_{b}\subset J_{a} \) if \( a\leq b \) and

    \item \( \mathcal{J} \) is finitely generated, which means that
    there are elements \( f_{1},\ldots,f_{r}\in R \) and degrees \(
    a_{1},\ldots,a_{r}\in\QPlus \), with \( f_{i}\in
    J_{a_{i}} \), \( i=1,\ldots,r \), such that for any \( a\in\QPlus
    \) the ideal \( J_{a} \) is generated (as ideal) by the set
    \begin{equation*}
	\{f_{i_{1}}\cdots f_{i_{\ell}}\mid 
	a_{i_{1}}+\cdots+a_{i_{\ell}}\geq a\}
    \end{equation*}
\end{enumerate}
\end{definition}

\begin{parrafo} \label{QReesAlg_fg}
With the notation of \ref{DefQReesAlg}, we say that the \( \mathbb{Q}
\)-Rees algebra is generated by \( f_{1}T^{a_{1}},\ldots,f_{r}T^{a_{r}} \).

Note that the \( \mathbb{Q} \)-Rees algebra \( \mathcal{J} \) is the smallest 
algebra satisfying properties (1), (2) and (3) of \ref{DefQReesAlg} 
and containing the elements \( f_{1}T^{a_{1}},\ldots,f_{r}T^{a_{r}} \).
\end{parrafo}

\begin{parrafo}
Note that the condition of being finitely generated may be expressed as
follows:
There are homogeneous elements
\( f_{1}T^{a_{1}},\ldots,f_{r}T^{a_{r}}\in \mathcal{J} \)
and there is an integer \( N \) such that
\begin{enumerate}
    \item  \( a_{i}N \) is an integer for \( i=1,\ldots,r \),

    \item  the finite set
    \begin{equation*}
	\{f_{i}T^{\frac{m}{N}}\mid m\leq a_{i}N,\ i=1,\ldots,r\}
    \end{equation*}
    generates \( \mathcal{J}\cap R[T^{\frac{1}{N}}] \) as an \( R 
    \)-algebra, and

    \item for any \( a\in\QPlus \) we have that \( 
    J_{a}=J_{\frac{\lceil aN\rceil}{N}} \).
\end{enumerate}
Where \( \lceil a\rceil \) denotes the smallest integer bigger than or
equal to \( a \).

In other words, a \( \mathbb{Q} \)-Rees algebra \(
\mathcal{J}\subset\GrQ(R) \) is equivalent to an \( \mathbb{N} \)-Rees
algebra in \( R[T^{\frac{1}{N}}] \), for big enough \( N \), where we 
fill rational levels
according to the rule \( J_{a}=J_{\frac{\lceil aN\rceil}{N}} \), \(
a\in\QPlus \).

Note also that if \( \mathcal{J} \) is a \( \mathbb{Q} \)-Rees 
algebra and \( b\in\QPlus \) then \( \mathcal{J}\cap R[T^{b}] \) is an 
\( \mathbb{N} \)-Rees algebra in \( R[T^{b}] \).
\end{parrafo}

Integral closure will be the criterion to be equivalent for \(
\mathbb{Q} \)-Rees algebras.
\begin{definition} \label{DefIntEquivQRA}
Two \( \mathbb{Q} \)-Rees algebras \( \mathcal{J}_{1} \) and \(
\mathcal{J}_{2} \) are equivalent if the Rees algebras \(
\mathcal{J}_{1}\cap R[T] \) and \( \mathcal{J}_{2}\cap R[T] \) have
the same integral closure in \( R[T] \) (i.e. they are equivalent
(\ref{DefIntEquivRA})).
\end{definition}

\begin{proposition}
Let \( \mathcal{J}_{1} \) and \(
\mathcal{J}_{2} \) be two \( \mathbb{Q} \)-Rees algebras in \( \GrQ(R) \).
The following statements are equivalent:
\begin{enumerate}
    \item  \( \mathcal{J}_{1} \) and \( \mathcal{J}_{2} \) are
    equivalent \( \mathbb{Q} \)-Rees algebras (\ref{DefIntEquivQRA}).

    \item For some \( b\in\QPos \), the algebras \( \mathcal{J}_{1}\cap R[T^{b}]
    \) and \( \mathcal{J}_{2}\cap R[T^{b}] \) have the same integral
    closure in \( R[T^{b}] \).

    \item  For any \( b\in\QPos \), the algebras \( \mathcal{J}_{1}\cap R[T^{b}]
    \) and \( \mathcal{J}_{2}\cap R[T^{b}] \) have the same integral
    closure in \( R[T^{b}] \).
\end{enumerate}
\end{proposition}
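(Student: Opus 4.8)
The plan is to reduce everything to showing that the property
\[
P(b):\quad \text{``$\mathcal{J}_{1}\cap R[T^{b}]$ and $\mathcal{J}_{2}\cap R[T^{b}]$ have the same integral closure in $R[T^{b}]$''}
\]
does not depend on the choice of $b\in\QPos$. Since $1\in\QPos$, statement~(1) is exactly $P(1)$, statement~(2) asserts $P(b)$ for some $b$, and statement~(3) asserts $P(b)$ for all $b$; hence once I prove that $P$ is constant in $b$, the three statements become literally the same assertion. To compare two scales $b,b'\in\QPos$ I would pass to a common refinement: writing $b=p/q$, $b'=p'/q$ over a common denominator and setting $c=\gcd(p,p')/q\in\QPos$, I get $b=mc$ and $b'=m'c$ with $m=p/\gcd(p,p')$ and $m'=p'/\gcd(p,p')$ positive integers. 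Thus it suffices to prove, for a fixed pair $\mathcal{J}_{1},\mathcal{J}_{2}$, that $P(mc)\Leftrightarrow P(c)$ for every $c\in\QPos$ and every integer $m\geq 1$.

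The key step is the claim that for any \( \mathbb{Q} \)-Rees algebra \( \mathcal{J} \) the inclusion of \( \mathbb{N} \)-Rees algebras
\[
\mathcal{A}_{0}:=\mathcal{J}\cap R[T^{mc}]\ \subset\ \mathcal{A}:=\mathcal{J}\cap R[T^{c}]
\]
is a finite extension, and I would prove it exactly with the monic-equation trick of~\ref{ExtraPropRees}. By the remark that \( \mathcal{J}\cap R[T^{b}] \) is always an \( \mathbb{N} \)-Rees algebra in \( R[T^{b}] \), the ring $\mathcal{A}$ is finitely generated as an $R$-algebra, say by homogeneous elements $f_{j}T^{n_{j}c}$ with $f_{j}\in J_{n_{j}c}$; moreover $R=J_{0}\subset\mathcal{A}_{0}$, so $\mathcal{A}=\mathcal{A}_{0}[\,f_{j}T^{n_{j}c}\,]$. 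Property~(2) of Definition~\ref{DefQReesAlg} gives $f_{j}^{m}\in J_{n_{j}c}^{m}\subset J_{mn_{j}c}$, and since $mn_{j}c=n_{j}(mc)$ we obtain $(f_{j}T^{n_{j}c})^{m}=f_{j}^{m}T^{mn_{j}c}\in\mathcal{A}_{0}$. Hence each generator satisfies the monic equation $Z^{m}-f_{j}^{m}T^{mn_{j}c}=0$ over $\mathcal{A}_{0}$, so $\mathcal{A}$ is generated over $\mathcal{A}_{0}$ by finitely many integral elements and is therefore module-finite over $\mathcal{A}_{0}$.

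Granting the claim, write $S_{0}=R[T^{mc}]\subset S=R[T^{c}]$ and let $\overline{(\cdot)}^{S_{0}}$, $\overline{(\cdot)}^{S}$ denote integral closures. Two elementary facts then finish the argument. First, because $\mathcal{J}_{i}\cap S_{0}\subset\mathcal{J}_{i}\cap S$ is finite (hence integral), we have $\overline{\mathcal{J}_{i}\cap S}^{S}=\overline{\mathcal{J}_{i}\cap S_{0}}^{S}$, so $P(c)$ is equivalent to $\overline{\mathcal{J}_{1}\cap S_{0}}^{S}=\overline{\mathcal{J}_{2}\cap S_{0}}^{S}$. Second, a monic equation over $\mathcal{J}_{i}\cap S_{0}$ reads the same in $S_{0}$ as in $S$, whence $\overline{\mathcal{J}_{i}\cap S_{0}}^{S_{0}}=\overline{\mathcal{J}_{i}\cap S_{0}}^{S}\cap S_{0}$. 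Now for $P(mc)\Leftrightarrow P(c)$: if the $\overline{\mathcal{J}_{i}\cap S_{0}}^{S}$ agree for $i=1,2$, intersecting with $S_{0}$ shows the $\overline{\mathcal{J}_{i}\cap S_{0}}^{S_{0}}$ agree, which is $P(mc)$; conversely, if $\overline{\mathcal{J}_{1}\cap S_{0}}^{S_{0}}=\overline{\mathcal{J}_{2}\cap S_{0}}^{S_{0}}=:C$, then $C$ is integral over each $\mathcal{J}_{i}\cap S_{0}$, so by transitivity of integral closure $\overline{\mathcal{J}_{i}\cap S_{0}}^{S}=\overline{C}^{S}$ for both $i$, giving $P(c)$.

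The intersection identity and the transitivity of integral closure are standard, so I expect the single real obstacle to be the finiteness claim, whose whole content is that raising a homogeneous element to the $m$-th power lands it in the coarser algebra $\mathcal{A}_{0}$. The only point there needing care is upgrading ``generated by integral elements'' to ``module-finite'', which requires that $\mathcal{A}=\mathcal{J}\cap R[T^{c}]$ be genuinely finitely generated over $R$; this is exactly the $\mathbb{N}$-Rees algebra remark of this section applied with $b=c$, and it is the hypothesis that makes the whole scale-independence go through.
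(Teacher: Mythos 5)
Your proposal is correct, and it arrives at the result by the same basic strategy as the paper — changing the grading scale costs nothing because the relevant extensions are finite — but you and the paper place the finiteness at different spots, and the difference is worth noting. The paper's entire proof is an appeal to the finiteness of the \emph{ambient} ring extensions \( R[T^{M}]\subset R[T] \), \( R[T]\subset R[T^{\frac{1}{N}}] \), \( R[T^{\frac{M}{N}}]\subset R[T^{\frac{1}{N}}] \); you instead prove finiteness where it is actually used, namely for the intersected algebras \( \mathcal{J}\cap R[T^{mc}]\subset \mathcal{J}\cap R[T^{c}] \) themselves, via the monic equation \( Z^{m}-f_{j}^{m}T^{mn_{j}c}=0 \), i.e.\ the trick of \ref{ExtraPropRees}. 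This is a real gain in rigor, not just in detail: if \( S_{0}\subset S \) is a finite extension and \( \mathcal{A}\subset S \) is a subalgebra, it does \emph{not} follow formally that \( \mathcal{A} \) is integral over \( \mathcal{A}\cap S_{0} \) (a monic equation over \( S_{0} \) for an element of \( \mathcal{A} \) has no reason to have its coefficients in \( \mathcal{A}\cap S_{0} \)); it is the grading — the observation that a suitable power of any homogeneous element of \( \mathcal{J}\cap R[T^{c}] \) lands in \( \mathcal{J}\cap R[T^{mc}] \), using property (2) of \ref{DefQReesAlg} — that makes the transfer legitimate, and your write-up supplies exactly this point that the paper leaves implicit. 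The rest of your argument (the reduction of \( b,b' \) to a common refinement \( c \), the replacement of the closure of \( \mathcal{J}_{i}\cap S \) in \( S \) by the closure of \( \mathcal{J}_{i}\cap S_{0} \) in \( S \) via transitivity, and the identity expressing the closure inside \( S_{0} \) as the closure inside \( S \) intersected with \( S_{0} \)) is standard and correctly applied, including the use of the paper's remark that \( \mathcal{J}\cap R[T^{b}] \) is an \( \mathbb{N} \)-Rees algebra, which guarantees the finite homogeneous generating set your monic-equation step needs.
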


\begin{proof}
It follows from the fact that all the extensions
\begin{equation*}
    R[T^{M}]\subset R[T], \quad
    R[T]\subset R[T^{\frac{1}{N}}], \quad
    R[T^{\frac{M}{N}}]\subset R[T^{\frac{1}{N}}]
\end{equation*}
are finite for \( M,N\in \mathbb{N} \).
\end{proof}

\begin{parrafo} \label{EquivaleNyQRA}
These algebraic structures, Rees algebras (\ref{DefReesAlg}),
\( \mathbb{N} \)-Rees algebras
(\ref{DefNReesAlg}) and \( \mathbb{Q} \)-Rees algebras
(\ref{DefQReesAlg}) are ``equivalent'' up to integral closure.

First of all, it follows from \ref{ExtraPropRees} that Rees algebras
(\ref{DefReesAlg}) and \( \mathbb{N} \)-Rees algebras
(\ref{DefNReesAlg}) are equivalent up to integral closure.

For the equivalence of \( \mathbb{N} \)-Rees algebras and \( \mathbb{Q} \)-Rees
algebras, we check that we can associate a  \( \mathbb{Q} \)-Rees
algebra to every  \( \mathbb{N} \)-Rees algebra and vice-versa.

Fix a ring \( R \), let \( \mathcal{I}=\oplus_{n} I_{n}T^{n}\subset R[T] \)
be an \( \mathbb{N} \)-Rees algebra.
Set \( \mathcal{I}\GrQ(R)=
\mathcal{J}=\oplus_{a}J_{a}T^{a}\subset \GrQ(R) \) as follows:
\begin{equation*}
    J_{a}=I_{\lceil a\rceil}, \qquad \forall a\in\QPlus
\end{equation*}
It is clear that \( \mathcal{I}\GrQ(R) \) is a \( \mathbb{Q} \)-Rees algebra.

Reciprocally, if \( \mathcal{J} \) is a \( \mathbb{Q} \)-Rees algebra 
then set \( \mathcal{I}=\mathcal{J}\cap R[T] \), which is an \( 
\mathbb{N} \)-Rees algebra.

Now for an \( \mathbb{N} \)-Rees algebra \( \mathcal{I} \), we have 
that \( \mathcal{I}=(\mathcal{I}\GrQ(R))\cap R[T] \).
Reciprocally if \( \mathcal{J} \) is a \( \mathbb{Q} \)-Rees algebra 
then \( \mathcal{J} \) and \( (\mathcal{J}\cap R[T])\GrQ(R) \) 
are equivalent \( \mathbb{Q} \)-Rees algebras (\ref{DefIntEquivQRA}).
\end{parrafo}

Recently Rees algebras have been used in new approaches to resolution 
of singularities in positive characteristic, see \cite{Hironaka2003}, 
\cite{Hironaka2005}, \cite{Villamayor2007}, \cite{Villamayor2008}.
On the other hand, in \cite{Kawanoue2007} and \cite{KawanoueMatsuki2006prep} 
the authors define \emph{idealistic filtrations}, which are 
collections of ideals indexed by real numbers with the same properties
as a \( \mathbb{Q} \)-Rees algebra (\ref{DefQReesAlg}).
\begin{definition}\cite[2.1.3.1]{Kawanoue2007}
An \emph{idealistic filtration} is a collection of ideals \( 
\mathcal{J}=\{J_{a}\}_{a\in\RPlus} \) such that
\begin{enumerate}
    \item  \( J_{0}=R \),
    
    \item \( J_{a}J_{b}\subset J_{a+b} \) for all \( a,b\in\RPlus \) 
    and
    
    \item  \( J_{b}\subset J_{a} \) if \( a\leq b \).
\end{enumerate}
We could also denote \( \mathcal{J}=\oplus_{a\in\RPlus}J_{a}T^{a} \).
\end{definition}

\begin{parrafo} \label{IdFilt_rfg}
The idealistic filtration generated by a subset \( L\subset 
R\times\RPlus \) is the minimal idealistic filtration containing the 
set \( L \).
An idealistic filtration is \emph{rationally and finitely generated} 
(r.f.g) if it is generated by a finite set in \( R\times\QPlus \):
\begin{equation*}
    \{f_{1}T^{a_{1}},\ldots,f_{r}T^{a_{r}}\}, \qquad a_{1},\ldots,a_{r}\in\QPlus
\end{equation*}
\end{parrafo}

\begin{parrafo} \label{TresEquiv}
The three notions: Rees algebras, \( \mathbb{Q} \)-Rees algebras and
rationally and finitely generated idealistic filtrations are
``equivalent'' up to integral closure.

By \ref{EquivaleNyQRA} and \ref{ExtraPropRees} it is enough to see 
the equivalence between \( \mathbb{Q} \)-Rees algebras and rationally 
and finitely generated idealistic filtrations.
And this follows by considering \( \mathbb{Q} \)-Rees algebras and 
rationally and finitely generated idealistic filtrations generated by 
\( f_{1}T^{a_{1}},\ldots,f_{r}T^{a_{r}} \) (\ref{QReesAlg_fg} and 
\ref{IdFilt_rfg}).
\end{parrafo}

\section{Sheaves.} \label{SeccSheaves}

Let \( W \) be a smooth scheme over a (perfect) field \( k \).

\begin{parrafo}
A sequence of coherent sheaves of ideals \( 
\{I_{n}\}_{n\in\mathbb{N}} \), \( I_{n}\subset\OO_{W} \), defines a 
sheaf of graded algebras \( \mathcal{I}=\oplus_{n}I_{n}T^{n} \) if \( 
I_{0}=\OO_{W} \) and \( I_{m}I_{n}\subset I_{m+n} \) for any \( 
m,n\in\mathbb{N} \).

We say that \( \mathcal{I}=\oplus_{n}I_{n}T^{n} \) is a Rees 
algebra over \( W \) if there is an affine open covering \( 
\{U_{i}\}_{i\in \Lambda} \) of \( W \) such that 
\begin{equation*}
    \mathcal{I}(U_{i})=\bigoplus_{n\in\mathbb{N}}I_{n}(U_{i})T^{n}
    \qquad \text{is a finitely generated } \OO_{W}(U_{i}) 
    \text{-algebra}
\end{equation*}
for any \( i\in\Lambda \).
Or equivalently, \( \mathcal{I}(U_{i}) \) is a Rees algebra in the 
ring \( \OO_{W}(U_{i})[T] \).
\end{parrafo}

Analogously we could define, \( \mathbb{N} \)-Rees algebras and \( \mathbb{Q}
\)-Rees algebras over \( W \).
\begin{definition} \label{DefQRAW}
A \( \mathbb{Q} \)-Rees algebra over \( W \) is denoted by
\begin{equation*}
    \mathcal{J}=\bigoplus_{a\in\QPlus} J_{a}T^{a}
\end{equation*}
where \( \{J_{a}\}_{a\in\QPlus} \) is a collection of coherent 
sheaves of ideals \( J_{a}\subset\OO_{W} \) such that:
\begin{enumerate}
    \item  \( J_{0}=\OO_{W} \),

    \item  \( J_{a}J_{b}\subset J_{a+b} \) for all \( 
    a,b\in\QPlus \),

    \item  \( J_{b}\subset J_{a} \) if \( a\leq b \) and

    \item  There exists an open covering of \( W \), by affine open sets 
    \( \{U_{i}\}_{i\in \Lambda} \) such that
    \( \mathcal{J}(U_{i}) \) is a \( \mathbb{Q} \)-Rees algebra in \( 
    \GrQ(\OO_{W}(U_{i})) \).
\end{enumerate}
\end{definition}

\begin{parrafo}
Let \( \mathcal{J} \) be a \( \mathbb{Q} \)-Rees algebra over \( W \) 
and consider an affine open set \( U\subset W \) such that \(
\mathcal{J}(U) \) is a \( \mathbb{Q} \)-Rees algebra in \(
\GrQ(\OO_{W}(U)) \).
For any open set \( V\subset U \) and for any point \( \xi\in U \)
there are natural maps:
\begin{equation*}
    \mathcal{J}(U)\to \mathcal{J}(V), \qquad 
    \mathcal{J}(U)\to \mathcal{J}_{\xi}
\end{equation*}
If \( f_{1}T^{a_{1}},\ldots,f_{r}T^{a_{r}}\in \mathcal{J}(U) \) are
generators of \( \mathcal{J}(U) \) as \( \mathbb{Q} \)-Rees algebra
(\ref{QReesAlg_fg}) then it is clear that the images of \(
f_{1}T^{a_{1}},\ldots,f_{r}T^{a_{r}} \) in \( \mathcal{J}(V) \) (resp.
\( \mathcal{J}_{\xi} \)) are also generators of the \( \mathbb{Q}
\)-Rees algebra \( \mathcal{J}(V) \) (resp. \( \mathcal{J}_{\xi} \)).
\end{parrafo} 

We say that \( \mathcal{J} \) is the zero algebra at a point \( \xi \)
if \( (J_{a})_{\xi}=0 \) for all \( a\in\QPos \). It 
follows from definition \ref{DefQRAW} that if \( 
(J_{b})_{\xi}=0 \) for some \( b\in\QPos \) then \( (J_{a})_{\xi}=0 
\) for all \( a\in\QPos \).

\begin{definition} \label{DefOrdRA}
Let \( \mathcal{J}=\oplus_{a}J_{a}T^{a} \) be a \( \mathbb{Q} \)-Rees 
algebra over \( W \) and let \( \xi\in W \) be a point.
Assume that \( \mathcal{J} \) is not the 
zero algebra at \( \xi \).
Define the order of \( \mathcal{J} \) at \( \xi \):
\begin{equation*}
    \ord(\mathcal{J})(\xi)=\inf_{a\in\QPos}\frac{\ord(J_{a})(\xi)}{a}
\end{equation*}
where \( \ord(J_{a})(\xi) \) denotes the order of the sheaf of ideals \( 
J_{a} \) at \( \xi \)
\begin{equation*}
    \ord(J_{a})(\xi)=\max\{n\in\mathbb{N}\mid (J_{a})_{\xi}\subset
    \mathfrak{m}_{\xi}^{n}\}
\end{equation*}
and \( \mathfrak{m}_{\xi}\subset\OO_{W,\xi} \) is the maximal ideal of the
local ring \( \OO_{W,\xi} \).

If \( \mathcal{J} \) is the zero algebra at \( \xi \) we may set \( 
\ord(\mathcal{J})(\xi)=\infty \).

We set the singular locus of \( \mathcal{J} \) as the set
\begin{equation*}
    \Sing(\mathcal{J})=\{\xi\in W\mid \ord(\mathcal{J})(\xi)\geq 1\}
\end{equation*}
\end{definition}

\begin{parrafo}
The order of \( \mathcal{J} \) at any point \( \xi \) is always a
rational number and it can be computed from a finite set of generators.
If \( f_{1}T^{a_{1}},\ldots,f_{r}T^{a_{r}}\in\mathcal{J}_{\xi} \) are 
generators of \( \mathcal{J} \) at \( \xi \) then
\begin{equation*}
    \ord(\mathcal{J})(\xi)=
    \min\left\{
    \frac{\ord(f_{1})}{a_{1}},\ldots,\frac{\ord(f_{r})}{a_{r}}
    \right\}
\end{equation*}
It follows that \( \ord(\mathcal{J}):W\to\QPlus \) is an 
upper-semicontinuous function. In particular the singular locus \( 
\Sing(\mathcal{J}) \) is a closed set.
\end{parrafo}

\begin{parrafo} \label{DefSimpleQRA}
The upper-semi-continuous function \( \ord(\mathcal{J}) \) will
stratify \( \Sing(\mathcal{J}) \) by locally closed sets and one may
focus to the maximum stratum, which is closed.

We say that a \( \mathbb{Q} \)-Rees algebra \( \mathcal{J} \) is
\emph{simple} if \( \ord(\mathcal{J})(\xi)=1 \) for any \(
\xi\in\Sing(\mathcal{J}) \).

We will see that, after a scaling operation (\ref{DefScaling}) we may 
reduce to the simple case.
\end{parrafo}

\section{Operations} \label{SeccOperations}

A very important concept with Rees algebras is integral closure.
Given a Rees algebra \( \mathcal{J}\subset\OO_{W}[T] \), the integral 
closure \( \mathcal{\bar{J}}\subset\OO_{W}[T] \) of \( \mathcal{J} \) 
in \( \OO_{W}[T] \) is the Rees algebra generated by all the elements of
\( \OO_{W}[T] \) that are integral over \( \mathcal{J} \).   
Note that it is well-known that the integral closure is a finitely 
generated \( \OO_{W} \)-algebra.

There is an open covering of \( W \), such that for any open set \( U \) 
of the covering, an element \( fT^{n}\in\OO_{W}(U)[T] \) belongs to \( 
\mathcal{\bar{J}} \) if and only if there exist \( m\in\mathbb{N} \) 
and a monic polynomial
\begin{equation*}
    p(Z)=Z^{m}+a_{1}Z^{m-1}+\cdots+a_{m}, \qquad
    \text{ with } a_{i}\in J_{in}, \quad i=1,\ldots,m
\end{equation*}
such that \( p(f)=0 \).

In fact we have that \( 
\mathcal{J} \) is equivalent to another Rees algebra \( 
\mathcal{J}_{1} \) if and only if \( 
\bar{\mathcal{J}}_{1}=\bar{\mathcal{J}} \).

We may define the analogous notion for \( \mathbb{Q} \)-Rees algebras.
\begin{definition}
Let \( \mathcal{J}=\oplus_{a}J_{a}T^{a} \) be a \( \mathbb{Q} \)-Rees 
algebra and let \( U \) be an open set of \( W \).
An element \( fT^{a}\in\OO_{W}(U)[T] \) is integral over \( \mathcal{J} \) 
if there exist \( m\in\mathbb{N} \) and a monic polynomial
\begin{equation*}
    p(Z)=Z^{m}+a_{1}Z^{m-1}+\cdots+a_{m}, \qquad
    \text{ with } a_{i}\in J_{ia}, \quad i=1,\ldots,m
\end{equation*}
such that \( p(f)=0\in\OO_{W}(U) \).

We set the integral closure \( \mathcal{\bar{J}} \) of \( \mathcal{J} \) as the \( 
\OO_{W} \)-algebra generated by all \( fT^{a} \) integral 
over \( \mathcal{J} \).
\end{definition}

From the definition we can not say that \( \mathcal{\bar{J}} \) is a 
\( \mathbb{Q} \)-Rees algebra, since the property of being finitely 
generated is not clear.
However, this fact will follow from radical closure. This concept was 
defined in \cite[2.1.3.1]{Kawanoue2007} and we will see the connection 
with \cite{LejeuneTeissier1974}.
\begin{definition} \cite{LejeuneTeissier1974}
Let \( \mathcal{J}=\oplus_{a}J_{a}T^{a} \) be a \( \mathbb{Q} \)-Rees 
algebra over \( W \). Fix a point \( \xi\in W \).
For an element \( f\in\OO_{W,\xi} \) we set
\begin{equation*}
    \nu_{\mathcal{J}}(f)=
    \sup\{a \mid fT^{a}\in\mathcal{J}\}
\end{equation*}
Note that \( \nu_{\mathcal{J}}(f) \) may be infinite if \( f=0 \).
\end{definition}

\begin{parrafo}
With the previous notation, for any \( f\in\OO_{W,\xi} \) consider 
the sequence
\begin{equation*}
    \left\{\frac{\nu_{\mathcal{J}}(f^{n})}{n}\right\}_{n=1}^{\infty}
\end{equation*}
Using the same arguments as in \cite[0.2.1]{LejeuneTeissier1974} we 
may prove that this sequence converges to some value in \( 
\mathbb{R}\cup\{\infty\} \).
So that it makes sense to set
\begin{equation*}
    \bar{\nu}_{\mathcal{J}}(f)=
    \lim_{n\to\infty}\frac{\nu_{\mathcal{J}}(f^{n})}{n}
\end{equation*}
\end{parrafo}

\begin{definition} \label{DefElemRadical}
Let \( \mathcal{J}=\oplus_{a}J_{a}T^{a} \) be a \( \mathbb{Q} \)-Rees 
algebra over \( W \).
An homogeneous element \( fT^{a}\in\OO_{W}[T] \) is radical over \( \mathcal{J} 
\) if \( \bar{\nu}_{\mathcal{J}}(f)\geq a \).

The radical saturation of \( \mathcal{J} \) is the \( \OO_{W} 
\)-algebra generated by all \( fT^{a} \) radical over \( 
\mathcal{J} \). We denote the radical saturation of \( \mathcal{J} \) 
as \( \mathfrak{R}(\mathcal{J}) \).
\end{definition}
From the definition, the radical saturation may not be a \( 
\mathbb{Q} \)-Rees algebra, since the condition of being finitely 
generated is not immediate.

\begin{parrafo}
It can be proved that \( fT^{a} \) is radical over \( \mathcal{J} \)
(\ref{DefElemRadical}) if and only if there are sequences \( 
\{a_{\ell}\}_{\ell=1}^{\infty} \) and \( 
\{n_{\ell}\}_{\ell=1}^{\infty} \) such that
\begin{itemize}
    \item  \( a_{\ell}\in\QPlus \), \( n_{\ell}\in\mathbb{N} \) for 
    all \( \ell \),

    \item  \( \lim_{\ell\to\infty}a_{\ell}=a\in\QPlus \), and

    \item  \( f^{n_{\ell}}T^{a_{\ell}n_{\ell}}\in\mathcal{J} \).
\end{itemize}
    
We recall the definition of radical saturation for idealistic 
filtrations \cite[2.1.3.1]{Kawanoue2007}. An idealistic filtration is 
said to be radical saturated if:
\begin{align*}
    \text{(radical)} &  & f^{m}\in J_{am},\ f\in R, 
    m\in\mathbb{N} & \Longrightarrow f\in J_{a}  \\
    \text{(continuity)} & & f\in J_{a_{\ell}},\ 
    \lim_{\ell\to\infty}a_{\ell}=a  & \Longrightarrow f\in J_{a}
\end{align*}
It follows that the two concepts: radical saturation of \( \mathbb{Q} 
\)-Rees algebras (\ref{DefElemRadical}) and radical saturation of 
idealistic filtrations \cite[2.1.3.1]{Kawanoue2007} coincide.
\end{parrafo}

\begin{proposition} \label{PropRadicalisQRA}
Let  \( \mathcal{J} \) be a  \( \mathbb{Q}\)-Rees algebra over  \( W \). 
The radical saturation \( \mathfrak{R}(\mathcal{J}) \) is a \( 
\mathbb{Q} \)-Rees algebra.
\end{proposition}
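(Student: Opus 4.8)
The plan is to work locally and to exhibit \( \mathfrak{R}(\mathcal{J}) \) as an explicit graded collection of ideals, thereby reducing the entire statement to the finite generation of a single \emph{ordinary} Rees algebra via its integral closure. Since finite generation is a local condition, I may fix an affine open \( U \), write \( R=\OO_W(U) \), and choose a generating system \( f_{1}T^{a_{1}},\ldots,f_{r}T^{a_{r}} \) of \( \mathcal{J} \) on \( U \) with common denominator \( N \), so that \( \mathcal{J}\cap R[T^{1/N}] \) is an \( \mathbb{N} \)-Rees algebra in \( R[T^{1/N}] \) and \( J_{a}=J_{\lceil aN\rceil/N} \) (cf.\ \ref{GrQlimit} and \ref{QReesAlg_fg}).

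First I would set \( \tilde{J}_{a}=\{f\in R\mid \bar{\nu}_{\mathcal{J}}(f)\geq a\} \) for each \( a\in\QPlus \) and check that \( \mathfrak{R}(\mathcal{J})=\bigoplus_{a}\tilde{J}_{a}T^{a} \). The routine input is that \( \bar{\nu}_{\mathcal{J}} \) behaves like an order function: since each \( J_{c} \) is an ideal and \( J_{c}J_{c'}\subset J_{c+c'} \), one has \( \nu_{\mathcal{J}}(rf)\geq \nu_{\mathcal{J}}(f) \), \( \nu_{\mathcal{J}}(f+g)\geq\min(\nu_{\mathcal{J}}(f),\nu_{\mathcal{J}}(g)) \) and \( \nu_{\mathcal{J}}(fg)\geq\nu_{\mathcal{J}}(f)+\nu_{\mathcal{J}}(g) \); passing to the limit (whose existence is \cite[0.2.1]{LejeuneTeissier1974}) gives the same inequalities for \( \bar{\nu}_{\mathcal{J}} \). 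Hence each \( \tilde{J}_{a} \) is an ideal with \( \tilde{J}_{0}=R \), \( \tilde{J}_{a}\tilde{J}_{b}\subset\tilde{J}_{a+b} \) and \( \tilde{J}_{b}\subset\tilde{J}_{a} \) for \( a\leq b \); that is, properties (1)--(3) of \ref{DefQReesAlg} hold, and \( \bigoplus_{a}\tilde{J}_{a}T^{a} \) is exactly the \( \OO_{W} \)-algebra generated by the radical elements. This settles everything except finite generation.

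For finite generation the idea is to compare \( \mathfrak{R}(\mathcal{J}) \) with an honest integral closure. Using the sequence characterization of radical elements one checks that, for the \( \mathbb{N} \)-Rees algebra \( \mathcal{J}\cap R[T^{1/N}] \) with reduced order function \( \bar{v} \), one has \( \bar{\nu}_{\mathcal{J}}(f)=\tfrac{1}{N}\,\bar{v}(f) \). The crucial input from \cite[0.2.1]{LejeuneTeissier1974} is that \( \bar{v} \) is governed by finitely many divisorial valuations, hence takes values in \( \tfrac{1}{e}\mathbb{Z}\cup\{\infty\} \) for a single integer \( e \). Replacing \( N \) by \( N'=Ne \) and writing \( S=T^{1/N'} \), the reduced order of \( \mathcal{J}\cap R[S] \) becomes integer valued, so that for every integer \( m \) the ideal \( \tilde{J}_{m/N'} \) coincides with the degree-\( m \) piece of the integral closure of \( \mathcal{J}\cap R[S] \) inside \( R[S] \). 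That integral closure is finitely generated, since integral closures of ordinary Rees algebras are finitely generated \( \OO_{W} \)-algebras (recalled at the start of this section). As \( \mathfrak{R}(\mathcal{J}) \) is recovered from this \( \mathbb{N} \)-Rees algebra by filling the rational levels through \( \tilde{J}_{a}=\tilde{J}_{\lceil aN'\rceil/N'} \), it is finitely generated, hence a \( \mathbb{Q} \)-Rees algebra.

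The main obstacle is the middle assertion: reconciling the two closures. Radical saturation is defined by real thresholds together with the continuity condition, whereas the integral closure of \( \mathcal{J}\cap R[S] \) only sees integer thresholds, so a priori the former is strictly finer. What makes them agree after passing to \( R[T^{1/N'}] \) is precisely the bounded-denominator rationality of \( \bar{v} \) supplied by Lejeune--Teissier's finite family of Rees valuations. Establishing this identification --- in particular that \( \bar{v} \) admits a common denominator and that the continuity condition adds nothing once the grading is fine enough --- is the delicate step; the remaining verifications are formal.
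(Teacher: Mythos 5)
Your argument is correct, but it follows a genuinely different route from the paper. The paper's proof is pure delegation: by the equivalence \ref{TresEquiv}, a \( \mathbb{Q} \)-Rees algebra is (up to integral closure) the same object as a rationally and finitely generated idealistic filtration, and \cite[2.3.2.4]{Kawanoue2007} states that radical saturation preserves the r.f.g.\ property --- that is the whole proof. You instead reduce everything to ordinary \( \mathbb{N} \)-graded Rees algebras: you describe \( \mathfrak{R}(\mathcal{J}) \) levelwise by the thresholds of \( \bar{\nu}_{\mathcal{J}} \), invoke the Rees-valuation description of \( \bar{\nu}_{\mathcal{J}} \) to obtain values with a single bounded denominator, and then identify the resulting \( \tfrac{1}{N'}\mathbb{Z} \)-levels with the graded pieces of the integral closure of \( \mathcal{J}\cap R[T^{1/N'}] \), whose finite generation is the classical fact recalled at the start of Section~\ref{SeccOperations}. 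Your route is more informative: it isolates exactly why bounded denominators are the crux (without them the filling rule \( \tilde{J}_{a}=\tilde{J}_{\lceil aN'\rceil/N'} \) fails, and finite generation in the sense of \ref{DefQReesAlg} cannot hold), and it delivers the identification \( \mathfrak{R}(\mathcal{J})=\mathcal{\bar{J}} \) of Proposition~\ref{IntegralisRadical} nearly for free; what the paper's citation buys is brevity, since Kawanoue's theorem already packages the Nagata-style valuation-theoretic core that you must still quote. Two caveats on your version: \cite[0.2.1]{LejeuneTeissier1974} only provides the existence of the limit \( \bar{\nu}_{\mathcal{J}} \) --- the finiteness of the family of Rees valuations computing it, hence the bounded-denominator rationality, is the content of \cite[\S 4]{LejeuneTeissier1974} (the very section the paper cites, with arguments going back to \cite{Nagata1957}); moreover that theorem requires the ambient ring to be analytically unramified, which holds here because the local rings of \( W \) are regular, but would not be automatic for an arbitrary noetherian \( R \), so you should state and cite that input precisely rather than fold it into the existence of the limit.
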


\begin{proof}
It follows from
\cite[2.3.2.4]{Kawanoue2007} where it is proved that radical saturation of a 
r.~f.~g. idealistic filtration is also a r.~f.~g. idealistic 
filtration.
In fact this result also appears in \cite[\S 4]{LejeuneTeissier1974}.
Both proofs use similar arguments inspired in \cite{Nagata1957}.

By the equivalence \ref{TresEquiv} we conclude then that \(
\mathfrak{R}(\mathcal{J}) \) is finitely generated and then it is a \(
\mathbb{Q} \)-Rees algebra.
\end{proof}

\begin{proposition} \label{IntegralisRadical}
The integral closure \( \mathcal{\bar{J}} \) is a \( \mathbb{Q} 
\)-Rees algebra. In fact \( 
\mathcal{\bar{J}}=\mathfrak{R}(\mathcal{J}) \).
\end{proposition}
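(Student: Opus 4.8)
The plan is to prove the sharper equality $\mathcal{\bar J}=\mathfrak{R}(\mathcal J)$; once this is established the first assertion is free, since Proposition \ref{PropRadicalisQRA} already tells us that $\mathfrak{R}(\mathcal J)$ is a $\mathbb Q$-Rees algebra, so no independent finiteness argument for $\mathcal{\bar J}$ is needed. Concretely, I would show that for a homogeneous $fT^a$ the two conditions ``$fT^a$ integral over $\mathcal J$'' and ``$fT^a$ radical over $\mathcal J$'', i.e. $\bar\nu_{\mathcal J}(f)\ge a$, are equivalent, and then read off the equality of algebras from the descriptions of $\mathcal{\bar J}$ and $\mathfrak{R}(\mathcal J)$ as the subalgebras generated by their integral, resp. radical, homogeneous elements.

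The inclusion $\mathcal{\bar J}\subseteq\mathfrak{R}(\mathcal J)$ (integral $\Rightarrow$ radical) should be elementary. Given a monic relation $f^{m}+a_{1}f^{m-1}+\dots+a_{m}=0$ with $a_{i}\in J_{ia}$, multiplying by $f^{n-m}$ gives $f^{n}=-\sum_{i=1}^{m}a_{i}f^{n-i}$ for every $n\ge m$. Writing $\nu_{k}=\nu_{\mathcal J}(f^{k})$ and using properties (2) and (3) of Definition \ref{DefQReesAlg} (that $J_{\alpha}J_{\beta}\subset J_{\alpha+\beta}$ and that the $J_{\bullet}$ are nested), each summand lies in $J_{ia+\nu_{n-i}}$, so $\nu_{n}\ge\min_{1\le i\le m}(ia+\nu_{n-i})$. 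An induction on $n$ then yields $\nu_{\mathcal J}(f^{n})\ge na-C$ for a constant $C$ depending only on the finitely many initial values $\nu_{0},\dots,\nu_{m-1}$, whence $\bar\nu_{\mathcal J}(f)=\lim_{n}\nu_{\mathcal J}(f^{n})/n\ge a$ and $fT^{a}$ is radical.

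For the reverse inclusion $\mathfrak{R}(\mathcal J)\subseteq\mathcal{\bar J}$ (radical $\Rightarrow$ integral) I would first reduce to finitely generated data: by \ref{GrQlimit} there is an $N$ with $\mathcal J\cap R[T^{1/N}]$ and $\mathfrak{R}(\mathcal J)\cap R[T^{1/N}]$ both finitely generated $\mathbb N$-Rees algebras, and since $\mathcal{\bar J}$ is a ring it suffices to prove that each homogeneous generator $fT^{a}$ of $\mathfrak{R}(\mathcal J)$, which satisfies $\bar\nu_{\mathcal J}(f)\ge a$, is integral over $\mathcal J$. Because $\nu_{\mathcal J}$ is superadditive ($f^{n+m}=f^{n}f^{m}$ forces $\nu_{n+m}\ge\nu_{n}+\nu_{m}$), one has $\bar\nu_{\mathcal J}(f)=\sup_{n}\nu_{\mathcal J}(f^{n})/n$. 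If some $n$ satisfies $\nu_{\mathcal J}(f^{n})>na$, then $f^{n}\in J_{na}$, so $(fT^{a})^{n}=f^{n}T^{na}\in\mathcal J$ and the monic relation $Z^{n}-f^{n}T^{na}=0$ shows $fT^{a}$ is integral.

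The main obstacle is the boundary case $\bar\nu_{\mathcal J}(f)=a$ with $\nu_{\mathcal J}(f^{n})<na$ for every $n$: here no power of $fT^{a}$ lies in $\mathcal J$, so the integral equation cannot be produced by hand, and the supremum defining $\bar\nu$ is approached but never witnessed by an honest membership. To close this case I would invoke the valuative analysis of $\bar\nu$ from \cite{LejeuneTeissier1974} (the same circle of ideas, traceable to \cite{Nagata1957}, that underlies \ref{PropRadicalisQRA}): for a finitely generated filtration $\bar\nu_{\mathcal J}$ is computed by finitely many divisorial valuations, and the integral closure of the associated Rees algebra in $R[T^{1/N}]$ is exactly $\{gT^{c}\mid\bar\nu_{\mathcal J}(g)\ge c\}$. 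Transporting this back through the finite extension $R[T]\subset R[T^{1/N}]$ identifies $\mathcal{\bar J}$ with the $\bar\nu$-saturation, that is, with $\mathfrak{R}(\mathcal J)$, and completes the equality. The delicate point throughout is precisely the attainment, at the level of an actual dependence relation, of the inequality $\bar\nu_{\mathcal J}(f)\ge a$, which is exactly what the finiteness of the Rees valuations supplies.
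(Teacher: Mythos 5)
Your proposal is correct, and in substance it travels the same road as the paper: the paper's entire proof consists of citing Proposition \ref{PropRadicalisQRA} together with \cite[2.3.2.7]{Kawanoue2007} (see also \cite[\S 4]{LejeuneTeissier1974}), i.e.\ it outsources the identity \( \bar{\mathcal{J}}=\mathfrak{R}(\mathcal{J}) \) entirely to the literature. You do somewhat more: you note that finiteness of \( \bar{\mathcal{J}} \) is free once the equality is known (by \ref{PropRadicalisQRA}), you prove the inclusion \( \bar{\mathcal{J}}\subseteq\mathfrak{R}(\mathcal{J}) \) by hand via the estimate \( \nu_{\mathcal{J}}(f^{n})\geq na-C \) extracted from a monic dependence relation together with superadditivity, and you correctly isolate the only genuinely hard point --- the boundary case \( \bar{\nu}_{\mathcal{J}}(f)=a \) with \( \nu_{\mathcal{J}}(f^{n})<na \) for every \( n \), where no power of \( fT^{a} \) actually lies in \( \mathcal{J} \) --- deferring it to the Rees-valuation/asymptotic theory of \cite{LejeuneTeissier1974} and \cite{Nagata1957}, which is exactly the circle of results the paper invokes (Kawanoue's 2.3.2.7 rests on them as well). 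So both proofs lean on identical external input; yours buys a self-contained treatment of the elementary inclusion and a precise diagnosis of where the finiteness of Rees valuations is indispensable, while the paper's citation of \cite[2.3.2.7]{Kawanoue2007} buys brevity and a statement already phrased in the idealistic-filtration language, which matches the \( \mathbb{Q} \)-graded setting through \ref{TresEquiv} with no translation through \( R[T^{\frac{1}{N}}] \). One small technical point in your first inclusion: the supremum defining \( \nu_{\mathcal{J}}(f^{n-i}) \) must be attained for the summand to lie in \( J_{ia+\nu_{n-i}} \); this is automatic here because a finitely generated \( \mathbb{Q} \)-Rees algebra has levels jumping only at multiples of \( \frac{1}{N} \), but it deserves a sentence (or replace \( \nu_{n-i} \) by \( \nu_{n-i}-\varepsilon \) and let \( \varepsilon\to 0 \)).
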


\begin{proof}
It follows from \ref{PropRadicalisQRA} and \cite[2.3.2.7]{Kawanoue2007}.
See also \cite[\S 4]{LejeuneTeissier1974}.
\end{proof}

The order function is well defined up to equivalence of \( \mathbb{Q} 
\)-Rees algebras.
\begin{proposition}
Let \( \mathcal{J}_{1} \) and \( \mathcal{J}_{2} \) be two equivalent \( 
\mathbb{Q} \)-Rees algebras (\ref{DefIntEquivQRA}). Then for any \( 
\xi\in W \) we have \( 
\ord(\mathcal{J}_{1})(\xi)=\ord(\mathcal{J}_{2})(\xi) \).
\end{proposition}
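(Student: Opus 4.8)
The plan is to show that $\ord(\mathcal{J})(\xi)$ depends only on the equivalence class \ref{DefIntEquivQRA}, by reducing to two facts: first, that the order of a $\mathbb{Q}$-Rees algebra is unchanged under integral closure; and second, that equivalent algebras share a common integral closure after intersecting with a suitable $R[T^{1/N}]$. Throughout I work at the fixed point $\xi$, so that $R=\OO_{W,\xi}$ is a regular local ring with maximal ideal $\mathfrak{m}$; recall that in such a ring $\ord(fg)=\ord(f)+\ord(g)$, since the associated graded ring is a polynomial ring and hence a domain. If $\mathcal{J}$ is the zero algebra at $\xi$ then so is its integral closure, so the convention $\ord(\mathcal{J})(\xi)=\infty$ is preserved and I may assume neither algebra is zero at $\xi$.

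First I would prove the key estimate: if $fT^{a}$ is integral over $\mathcal{J}$, then $\ord(f)\geq a\cdot\ord(\mathcal{J})(\xi)$. Take a monic relation $f^{m}+a_{1}f^{m-1}+\cdots+a_{m}=0$ with $a_{i}\in J_{ia}$. By \ref{DefOrdRA} we have $\ord(a_{i})\geq\ord(J_{ia})(\xi)\geq ia\cdot\ord(\mathcal{J})(\xi)$. Writing $-f^{m}=\sum_{i}a_{i}f^{m-i}$ and using $\ord(g+h)\geq\min(\ord g,\ord h)$ together with additivity of $\ord$, some index $i_{0}\geq1$ must satisfy $m\,\ord(f)\geq\ord(a_{i_{0}})+(m-i_{0})\ord(f)$, whence $i_{0}\,\ord(f)\geq\ord(a_{i_{0}})\geq i_{0}a\cdot\ord(\mathcal{J})(\xi)$ and therefore $\ord(f)\geq a\cdot\ord(\mathcal{J})(\xi)$. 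Since each $\bar{J}_{a}$ is generated by such elements, this gives $\ord(\bar{J}_{a})(\xi)\geq a\cdot\ord(\mathcal{J})(\xi)$, hence $\ord(\mathcal{\bar{J}})(\xi)\geq\ord(\mathcal{J})(\xi)$; the reverse inequality is immediate from $\mathcal{J}\subset\mathcal{\bar{J}}$, since a larger ideal has smaller order. Thus $\ord(\mathcal{J})(\xi)=\ord(\mathcal{\bar{J}})(\xi)$, and the same argument applies verbatim to $\mathbb{N}$-Rees algebras in a polynomial ring $R[S]$.

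Next I would pass to a common denominator. Because $\mathcal{J}_{1}$ and $\mathcal{J}_{2}$ are finitely generated \ref{QReesAlg_fg}, there is a single $N$ such that each $\mathcal{J}_{i}\cap R[T^{1/N}]$ is an $\mathbb{N}$-Rees algebra in $R[S]$, $S=T^{1/N}$, with $(J_{i})_{a}=(J_{i})_{\lceil aN\rceil/N}$. Using this rule and property (3) of \ref{DefQReesAlg}, the infimum defining the order is attained along the levels $a=m/N$, so that
\begin{equation*}
    \ord(\mathcal{J}_{i})(\xi)=\inf_{m\geq1}\frac{\ord((J_{i})_{m/N})(\xi)}{m/N}=N\cdot\ord\bigl(\mathcal{J}_{i}\cap R[T^{1/N}]\bigr)(\xi),
\end{equation*}
the right-hand order being that of the $\mathbb{N}$-Rees algebra in $R[S]$. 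By the proposition preceding this statement (with $b=1/N$), equivalence \ref{DefIntEquivQRA} implies that $\mathcal{J}_{1}\cap R[T^{1/N}]$ and $\mathcal{J}_{2}\cap R[T^{1/N}]$ have the same integral closure in $R[S]$. Combining this with the order invariance of the previous paragraph, applied to these two $\mathbb{N}$-Rees algebras, their orders coincide, and multiplying by $N$ yields $\ord(\mathcal{J}_{1})(\xi)=\ord(\mathcal{J}_{2})(\xi)$.

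I expect the main obstacle to be the reverse inequality in the order-invariance step, namely controlling $\ord(f)$ for an element $fT^{a}$ that is merely integral rather than a monomial in the chosen generators; the monic-equation estimate above is precisely where regularity of $\OO_{W,\xi}$, through additivity of the order, is genuinely used. The remaining points, reducing the infimum over $\QPos$ to the arithmetic-progression levels $m/N$ and transporting the equality of integral closures from $R[T]$ to $R[T^{1/N}]$, are routine given finite generation and the preceding proposition.
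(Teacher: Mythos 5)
Your proof is correct, but it takes a genuinely different route from the paper's. The paper's own proof is two lines: it invokes Proposition \ref{IntegralisRadical}, which identifies the integral closure \( \mathcal{\bar{J}} \) with the radical saturation \( \mathfrak{R}(\mathcal{J}) \) (itself resting on \cite[2.3.2.7]{Kawanoue2007} and \cite{LejeuneTeissier1974}), together with the stated-but-not-proved fact that \( \ord(\mathcal{J})(\xi)=\ord(\mathfrak{R}(\mathcal{J}))(\xi) \), which is natural from the limit definition of \( \bar{\nu}_{\mathcal{J}} \). You instead prove order-invariance under integral closure directly: the monic-equation estimate \( \ord(f)\geq a\cdot\ord(\mathcal{J})(\xi) \) for integral elements \( fT^{a} \), using additivity of \( \ord \) in the regular local ring \( \OO_{W,\xi} \), and you then transport the \( \mathbb{Q} \)-graded statement to \( \mathbb{N} \)-Rees algebras in \( R[T^{\frac{1}{N}}] \) via a common denominator and the unnamed proposition preceding \ref{EquivaleNyQRA} (taking \( b=\frac{1}{N} \)); the rescaling \( \ord(\mathcal{J}_{i})(\xi)=N\cdot\ord(\mathcal{J}_{i}\cap R[T^{\frac{1}{N}}])(\xi) \) is consistent with the scaling rule of \ref{DefScaling}, and the reduction of the infimum to the levels \( \frac{m}{N} \) is justified by the filling rule \( J_{a}=J_{\frac{\lceil aN\rceil}{N}} \). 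What your approach buys: it is self-contained, avoids the radical-saturation machinery and the external references entirely, and makes explicit where regularity of \( W \) is used. What the paper's approach buys: brevity, since \ref{PropRadicalisQRA} and \ref{IntegralisRadical} are needed elsewhere anyway (they are what make \( \mathcal{\bar{J}} \) a \( \mathbb{Q} \)-Rees algebra at all), and uniform treatment of the \( \mathbb{Q} \)-grading without passing to a common denominator. One small point to make explicit in your write-up: since equivalence (\ref{DefIntEquivRA}) is phrased via ring-theoretic integral closure in \( R[T^{\frac{1}{N}}] \), your estimate needs the standard fact that a homogeneous element of the integral closure of a graded subalgebra satisfies a monic equation with homogeneous coefficients \( a_{i}\in J_{ia} \); the paper records exactly this characterization at the start of Section \ref{SeccOperations}, so the step is legitimate, but it should be cited rather than taken silently.
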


\begin{proof}
It follows from the fact that
\( \ord(\mathcal{J})(\xi)=\ord(\mathfrak{R}(\mathcal{J}))(\xi) \), for 
any \( \xi\in W \), and \ref{IntegralisRadical}.
\end{proof}

Another important operation on \( \mathbb{Q} \)-Rees algebras is 
differential saturation. This notion appears also for Rees algebras 
(see \cite{Villamayor2008} and \cite{Villamayor2007}) and for 
idealistic filtrations \cite{Kawanoue2007}. See also 
\cite{Hironaka2003}.

Set \( \Diff_{W}^{m} \) to be the sheaf of differentials operators of 
order \( \leq m \).
We will say that a \( \mathbb{Q} \)-Rees algebra is differentially 
saturated if it is stable by the action of differentials, to be more 
precise:
\begin{definition} \label{DefDiffSat}
Let \( \mathcal{J}=\oplus_{a}J_{a}T^{a} \) be a \( \mathbb{Q} \)-Rees 
algebra. We say that \( \mathcal{J} \) is differentially saturated if
\begin{equation*}
    \Diff_{W}^{m}(J_{a})\subset J_{a-m}, \qquad \forall a\in\QPlus, \ 
    \forall m\in\mathbb{N}, \quad a\geq m
\end{equation*}
If \( \mathcal{J} \) is any \( \mathbb{Q} \)-Rees algebra, we denote 
\( \Diff(\mathcal{J}) \) to be the minimal \( \mathbb{Q} \)-Rees 
algebra, differentially saturated and containing \( \mathcal{J} \).
\end{definition}

\begin{parrafo}
The \( \mathbb{Q} \)-Rees algebra \( \Diff(\mathcal{J}) \) always exists and 
it may be computed from a set of generators of \( \mathcal{J} \).
If \( f_{1}T^{a_{1}},\ldots,f_{r}T^{a_{r}} \) is a set of generators 
of \( \mathcal{J}(U) \) (for some suitable open set \( U \)), then a 
set of generators of \( \Diff(\mathcal{J})(U) \) is
\begin{equation*}
    \{D(f_{i}) \mid i=1,\ldots,r,\ 
    D\in\Diff_{W}^{\lfloor a_{i}\rfloor}(U)\}
\end{equation*}
Note that, since \( \Diff_{W}^{m} \) is a locally free sheaf of 
finite rank, we could consider a finite set of generators for \( 
\Diff(\mathcal{J})(U) \).
\end{parrafo}

\begin{proposition}
Let \( \mathcal{J} \) be a \( \mathbb{Q} \)-Rees algebra. If \( 
\xi\in\Sing(\mathcal{J}) \) then \( 
\ord(\mathcal{J})(\xi)=\ord(\Diff(\mathcal{J}))(\xi) \).

If \( \xi\not\in\Sing(\mathcal{J}) \) then \( 
\ord(\Diff(\mathcal{J}))(\xi)=0 
\), or equivalently \( \Diff(\mathcal{J})_{\xi}=\GrQ(\OO_{W}) \).
\end{proposition}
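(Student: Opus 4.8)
The plan is to read off $\ord(\Diff(\mathcal{J}))(\xi)$ from the explicit generators of $\Diff(\mathcal{J})$ recalled after Definition \ref{DefDiffSat}, using two standard facts about the sheaf $\Diff_W^m$ on a smooth scheme over a perfect field: (i) an operator of order $\le m$ cannot drop the order by more than $m$, i.e. $\ord(D(g))(\xi)\ge \ord(g)(\xi)-m$ for $D\in\Diff_W^m$, because $D$ maps $\mathfrak{m}_\xi^{k}$ into $\mathfrak{m}_\xi^{k-m}$; and (ii) conversely, if $\ord(g)(\xi)=n$ then there is $D\in\Diff_W^{n}$ with $D(g)$ a unit at $\xi$. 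I would fix generators $f_1T^{a_1},\dots,f_rT^{a_r}$ of $\mathcal{J}$ at $\xi$ and set $\lambda=\ord(\mathcal{J})(\xi)=\min_i \ord(f_i)/a_i$. Since $\mathcal{J}\subset\Diff(\mathcal{J})$, the inequality $\ord(\Diff(\mathcal{J}))(\xi)\le\lambda$ holds with no hypothesis on $\xi$.

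For the first assertion I would compute the order from the generating set $\{D(f_i)T^{a_i-m}\mid D\in\Diff_W^m,\ 0\le m\le\lfloor a_i\rfloor\}$ of $\Diff(\mathcal{J})$; the finitely many generators of degree $0$ (occurring only when $a_i\in\mathbb{N}$ and $m=a_i$) are irrelevant, since $J_0=\OO_W$ and the order is an infimum over positive degrees. For a generator of positive degree $a_i-m>0$, fact (i) gives
\[
\frac{\ord(D(f_i))}{a_i-m}\ \ge\ \frac{\ord(f_i)-m}{a_i-m}\ \ge\ \frac{\lambda a_i-m}{a_i-m}.
\]
A one-line computation shows that $\dfrac{\lambda a_i-m}{a_i-m}\ge\lambda$ is equivalent, for $m\ge1$, to $\lambda m\ge m$, that is to $\lambda\ge1$ (for $m=0$ the bound is immediate). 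Thus precisely when $\xi\in\Sing(\mathcal{J})$ does every generator contribute a quotient $\ge\lambda$, while the generators with $D=\mathrm{id}$ realize the value $\lambda$. Taking the minimum over generators (the generator formula for $\ord$ following \ref{DefOrdRA}) yields $\ord(\Diff(\mathcal{J}))(\xi)=\lambda$.

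For the second assertion, assume $\xi\notin\Sing(\mathcal{J})$, so $\lambda<1$ and some index $i$ satisfies $n:=\ord(f_i)(\xi)<a_i$; as $n$ is an integer this forces $n\le\lfloor a_i\rfloor$. By fact (ii) there is $D\in\Diff_W^{n}\subset\Diff_W^{\lfloor a_i\rfloor}$ with $D(f_i)$ a unit in $\OO_{W,\xi}$. By differential saturation $D(f_i)T^{a_i-n}\in\Diff(\mathcal{J})$, a generator of positive degree $a_i-n$ whose coefficient has order $0$, so $\ord(\Diff(\mathcal{J}))(\xi)=0$. Multiplying by the inverse unit, which lies in the degree-$0$ piece $J_0=\OO_{W,\xi}$, gives $T^{a_i-n}\in\Diff(\mathcal{J})_\xi$; raising to powers (property (2)) and filling in lower degrees (property (3) of \ref{DefQReesAlg}) forces $J_a=\OO_{W,\xi}$ for every $a\in\QPlus$, i.e. $\Diff(\mathcal{J})_\xi=\GrQ(\OO_W)$.

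The only non-formal ingredient is fact (ii): that $\Diff_W^{n}$ is large enough to lower the order of any $f$ with $\ord(f)(\xi)=n$ all the way to $0$. Over a perfect field this is the classical property of the Grothendieck sheaf of differential operators (in positive characteristic one genuinely needs the divided-power/Hasse--Schmidt operators, not merely iterated derivations), and it is the single place where smoothness of $W$ and perfection of $k$ enter. Everything else is the bookkeeping of the quotients $\ord/\deg$, the key numerical point being that the threshold $\lambda=1$ separating the two cases is exactly the condition $\ord\ge1$ defining $\Sing(\mathcal{J})$.
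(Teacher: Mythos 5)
Your proof is correct and follows essentially the same route as the paper: the same two facts about \( \Diff_{W}^{m} \) (order drops by at most \( m \); an order-\( n \) element can be reduced to a unit by an operator of order \( n \) over a perfect field), the same key inequality \( \frac{\ord(f)-m}{a-m}\geq \lambda \) holding exactly when \( \lambda\geq 1 \), and the same production of a unit times \( T^{a-n} \) in the non-singular case. The only differences are cosmetic — you phrase the computation in terms of a fixed generating set and avoid the paper's power trick \( f\mapsto f^{m} \) used to reduce to integral degree.
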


\begin{proof}
See also \cite{Villamayor2008}.

First of all assume \( \xi\not\in\Sing(\mathcal{J}) \), then there exist
\( a\in\QPos \) and
\( f\in (J_{a})_{\xi} \) such that \( \ord(f)<a \).
We may assume that \( a\in\mathbb{N} \), since for any positive integer \( m
\), we have \( f^{m}\in(J_{ma})_{\xi} \) and \( \ord(f^{m})<ma \).
Then \( \ord(f)\leq a-1 \) and
there exists a differential operator \( D\in\Diff_{\OO_{W,\xi}}^{a-1} \)
such that \( D(f)\in\OO_{W,\xi} \) is a unit (or equivalently \(
\ord(D(f))=0 \)).
By definition~\ref{DefDiffSat} \( D(f)T\in\Diff(\mathcal{J})_{\xi} \) 
and this implies that \( \Diff(\mathcal{J})_{\xi}=\GrQ(\OO_{W}) \).

Now assume that \( \xi\in\Sing(\mathcal{J}) \). Set \(
\alpha=\ord(\mathcal{J})(\xi) \).
For any \( f\in\OO_{W,\xi} \) with \( fT^{a}\in\mathcal{J} \), any 
integer \( 0\leq k\leq a-1 \) and any differential operator \( D \) of
order \( k \), we have
\begin{equation*}
    \frac{\ord(D(f))}{a-k}\geq\frac{\ord(f)-k}{a-k}\geq \frac{\ord(f)}{a}\geq 1
\end{equation*}
and we conclude that \( \ord(\mathcal{J})(\xi)=\ord(\Diff(\mathcal{J}))(\xi) \).
Note that the hypothesis \( \xi\in\Sing(\mathcal{J}) \) is necessary 
in order to have \( \ord(f)\geq a \) in the chain of inequalities above.
\end{proof}

\begin{corollary} \label{CoroSingDiff}
\( \Sing(\mathcal{J})=\Sing(\Diff(\mathcal{J})) \).
\end{corollary}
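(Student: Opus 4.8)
The plan is to deduce this directly from the preceding proposition, which computes $\ord(\Diff(\mathcal{J}))(\xi)$ pointwise according to whether $\xi$ lies in $\Sing(\mathcal{J})$. Recall from definition~\ref{DefOrdRA} that $\Sing(\mathcal{J})=\{\xi\in W\mid \ord(\mathcal{J})(\xi)\geq 1\}$, and the same description applies to $\Diff(\mathcal{J})$. Thus the asserted equality of singular loci is equivalent to showing, at every point $\xi\in W$, that the condition $\ord(\mathcal{J})(\xi)\geq 1$ holds if and only if $\ord(\Diff(\mathcal{J}))(\xi)\geq 1$.

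First I would treat the inclusion $\Sing(\mathcal{J})\subseteq\Sing(\Diff(\mathcal{J}))$. If $\xi\in\Sing(\mathcal{J})$, then the first statement of the proposition gives $\ord(\Diff(\mathcal{J}))(\xi)=\ord(\mathcal{J})(\xi)\geq 1$, so $\xi\in\Sing(\Diff(\mathcal{J}))$. For the reverse inclusion I would argue by contraposition: if $\xi\not\in\Sing(\mathcal{J})$, the second statement of the proposition gives $\ord(\Diff(\mathcal{J}))(\xi)=0<1$, whence $\xi\not\in\Sing(\Diff(\mathcal{J}))$. Combining the two inclusions yields the equality.

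Since both halves of the proposition are already established, there is no genuine obstacle here. The only point requiring care is to invoke the correct half of the proposition for each inclusion and to observe that the dichotomy $\xi\in\Sing(\mathcal{J})$ versus $\xi\not\in\Sing(\mathcal{J})$ exhausts all of $W$, so that the pointwise equivalence above is verified at every point.
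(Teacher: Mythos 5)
Your proof is correct and is essentially the paper's argument: the corollary is stated without proof precisely because it follows immediately from the two halves of the preceding proposition, exactly as you invoke them (the first half for $\Sing(\mathcal{J})\subseteq\Sing(\Diff(\mathcal{J}))$, the second, by contraposition, for the reverse inclusion).
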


\begin{definition}
Let \( \mathcal{J}_{1} \) and \( \mathcal{J}_{2} \) be two \( 
\mathbb{Q} \)-Rees algebras over \( W \).
We define
\( \mathcal{J}_{1}\odot\mathcal{J}_{2} \) as the \( \mathbb{Q} 
\)-Rees algebra generated by \( \mathcal{J}_{1}\cup\mathcal{J}_{2} \).
\end{definition}

\begin{parrafo}
If \( f_{1}T^{a_{1}},\ldots,f_{r}T^{a_{r}} \) are generators of \( 
\mathcal{J}_{1} \) and \( g_{1}T^{b_{1}},\ldots,g_{s}T^{b_{s}} \) are 
generators of \( \mathcal{J}_{2} \) it is clear that
\begin{equation*}
    f_{1}T^{a_{1}},\ldots,f_{r}T^{a_{r}}, 
    g_{1}T^{b_{1}},\ldots,g_{s}T^{b_{s}}
\end{equation*}
are generators of \( \mathcal{J}_{1}\odot\mathcal{J}_{2} \).

The notation \( \odot \) appeared in \cite{EncinasVillamayor2007}.
The use of \( \odot \) instead of \( \oplus \) was motivated since the
notation \( \mathcal{J}_{1}\oplus\mathcal{J}_{2} \) could be ambiguous.
\end{parrafo}

\begin{definition} \label{DefScaling}
Given \( \mathcal{J} \) and \( b\in\QPos \) we could do a scaling on 
the levels of \( \mathcal{J} \) as follows:
\begin{equation*}
    \mathcal{J}^{b}=\bigoplus_{a\in\QPlus}J_{a}T^{\frac{a}{b}}
\end{equation*}
\end{definition}

\begin{parrafo}
Note that \( fT^{a}\in\mathcal{J} \) iff \( fT^{\frac{a}{b}}\in\mathcal{J}^{b} \).
    
If \( f_{1}T^{a_{1}},\ldots,f_{r}T^{a_{r}} \) are generators of \( 
\mathcal{J} \) then \(
f_{1}T^{\frac{a_{1}}{b}},\ldots,f_{r}T^{\frac{a_{r}}{b}} \) are 
generators of \( \mathcal{J}^{b} \).

Note also that the order of \( \mathcal{J}^{b} \) is multiplied by \( 
b \)  by definition of the order function.
\begin{equation*}
    \ord(\mathcal{J}^{b})(\xi)=b\ord(\mathcal{J})(\xi), \qquad \forall
    \xi\in W.
\end{equation*}
\end{parrafo}

\begin{definition}
Let \( \mathcal{J} \) be a \( \mathbb{Q} \)-Rees algebra over \( W \)
and let \( \ell\in\QPlus \). Fix a (smooth) hypersurface \( H\subset W \).

We say that \( I(H)^{\ell} \) divides \( \mathcal{J} \) if 
\( J_{a}\subset I(H)^{\lceil a\ell\rceil} \) for any \( a\in\QPlus \).

Note that \( J_{a}=I(H)^{\lceil a\ell\rceil}I_{a} \) for some ideal \(I_{a} \). We set
 \( I(H)^{-\ell}\mathcal{J} \) to be the \( \mathbb{Q} \)-Rees 
algebra generated by \( \{I_{a}T^{a}\mid a\in\QPlus\} \).
\end{definition}

\begin{parrafo}
Note that in general \( \bigoplus_{a}I_{a}T^{a} \) is not a \( 
\mathbb{Q} \)-Rees algebra since condition (3) in \ref{DefQReesAlg} 
is not always satisfied.
\end{parrafo}

\begin{parrafo} \label{DivideHgen}
Let \( H\subset W \) be a (smooth) hypersurface, denote \( \nu_{H} \) 
the valuation associated to the hypersurface \( H \).

If \( f_{1}T^{a_{1}},\ldots,f_{r}T^{a_{r}} \) are generators of \( 
\mathcal{J} \) then \( I(H)^{\ell} \) divides \( \mathcal{J} \) if 
and only if \( \nu_{H}(f_{i})\geq a_{i}\ell \) for \( i=1,\ldots,r \).

Set \( \ell_{H} \) to be the supremum of all \( \ell\in\QPlus \) 
such that \( I(H)^{\ell} \) divides \( \mathcal{J} \).
It follows from the above characterization that
\begin{equation*}
    \ell_{H}=\min\left\{\frac{\nu_{H}(f_{i})}{a_{i}}\mid 
    i=1,\ldots,r\right\}
\end{equation*}
\end{parrafo}

\begin{parrafo} \label{DefTransformJ}
Let \( \mathcal{J} \) be a \( \mathbb{Q} \)-Rees algebra, and let \( 
C\subset\Sing(\mathcal{J}) \) be a smooth and closed set.
Let \( \Pi:W'\to W \) be the monoidal transformation with center \( C \).
The total transform of \( \mathcal{J} \) is \( 
\mathcal{J}^{\ast}=\oplus_{a}J_{a}^{\ast}T^{a} \), where \( 
J_{a}^{\ast} \) is the total transform of \( J_{a} \).
Note that \( \mathcal{J}^{\ast} \) is a \( \mathbb{Q} \)-Rees algebra.
Set \( H\subset W' \) to be the exceptional divisor of \( \Pi \).

Note also that \( I(H) \) divides \( \mathcal{J}^{\ast} \). The \( 
\mathbb{Q} \)-Rees algebra \( 
\mathcal{J}'=I(H)^{-1}\mathcal{J}^{\ast} \) is called the transform 
of \( \mathcal{J} \).

Let \( f_{1}T^{a_{1}},\ldots,f_{r}T^{a_{r}} \) be generators of \(
\mathcal{J} \). Denote \( f_{i}^{\ast} \) the total transform of \(
f_{i} \), via the morphism \( \OO_{W}\to\OO_{W'} \).
From the fact \( C\subset\Sing(\mathcal{J}) \) it follows that
\( f_{i}^{\ast}=x^{a_{i}}g_{i} \) for some \( g_{i}\in\OO_{W'} \) and 
where \( I(H)=(x) \).
The transform \( \mathcal{J}' \) is generated by \(
g_{1}T^{a_{1}},\ldots,g_{r}T^{r} \).
\end{parrafo}

\begin{parrafo} \label{DefNonMonomPart}
Let \( E=\{H_{1},\ldots,H_{N}\} \) be a set of smooth hypersurfaces in \( W 
\) having only normal crossings and let \(
\mathcal{J}=\oplus_{a}J_{a}T^{a} \) be a \( \mathbb{Q} \)-Rees algebra
over \( W \).

For \( i=1,\ldots,r \), set \( \ell_{H_{i}} \) to be the maximum such that
\( I(H_{i})^{\ell_{H_{i}}} \) divides \( \mathcal{J} \) 
(\ref{DivideHgen}).
Note that for any \( a\in\QPlus \)
\begin{equation*}
    J_{a}=I(H_{1})^{\lceil a\ell_{H_{1}} \rceil} \cdots
    I(H_{N})^{\lceil a\ell_{H_{N}} \rceil} I_{a}
\end{equation*}
where \( I_{a}\subset\OO_{W} \) is a sheaf of ideals.

Set \( \mathcal{I}=E^{-1}\mathcal{J} \) to be the \( \mathbb{Q}
\)-Rees algebra generated by \( \{I_{a}T^{a}\mid a\in\QPlus\} \).
We will call \( E^{-1}\mathcal{J} \) the non monomial part of \(
\mathcal{J} \) with respect to \( E \).
\end{parrafo}

\begin{parrafo}
Consider some point or some affine open set of \( W \).
Suppose that \( f_{1}T^{a_{1}},\ldots,f_{r}T^{a_{r}} \) are generators of \(
\mathcal{J} \) and \( I(H_{j})=(x_{j}) \), \( j=1,\ldots,N \).
Set \( c_{i,j}=\nu_{H_{j}}(f_{i}) \), for \( i=1,\ldots,r \) and \(
j=1,\ldots,N \). We have
\begin{equation*}
    f_{i}=x_{1}^{c_{i,1}}\cdots x_{N}^{c_{i,N}}g_{i}
\end{equation*}
where the equation \( g_{i}\not\in (x_{j}) \) for any \( j=1,\ldots,N \).
Set \( c'_{i,j}=c_{i,j}-\lceil a_{i}\ell_{H_{j}}\rceil \).
Note that \( c'_{i,j}\geq 0 \) since
\( \ell_{H_{j}}=\min\{\frac{c_{i,j}}{a_{i}}\mid i=1,\ldots,r\} \).
The \( \mathbb{Q} \)-Rees algebra \( E^{-1}\mathcal{J} \) is generated
by 
\begin{equation*}  
h_{1}T^{a_{1}},\ldots,h_{r}T^{a_{r}}
\end{equation*}
where
\begin{equation*}
    h_{i}=x_{1}^{c'_{i,1}}\cdots x_{N}^{c'_{i,N}}g_{i}, \qquad i=1,\ldots,r.
\end{equation*}
\end{parrafo}

\section{Coefficient algebras.} \label{SeccCoeff}

For any morphism \( V\to W \) of smooth varieties we have the natural morphism of sheaves \( 
\OO_{W}\to\OO_{V} \). We may also consider a morphism \(
\GrQ(\OO_{W})\to\GrQ(\OO_{V}) \).

\begin{definition}
Fix \( W \) a smooth variety of pure dimension and a \( \mathbb{Q}
\)-Rees algebra \( \mathcal{J} \) over \( W \).
If \( V\subset W \) is a smooth subvariety of pure dimension,
we set the coefficient algebra of \( \mathcal{J} \) with respect to \( 
V \) as follows
\begin{equation*}
    \Coeff_{V}(\mathcal{J})=\Diff(\mathcal{J})\GrQ(\OO_{V})
\end{equation*}
\end{definition}

\begin{parrafo}
It is well known that the coefficient algebra describes the same
singular locus as \( \mathcal{J} \). If \( V\subset W \) is a smooth
subvariety of \( W \) then
\begin{equation} \label{EqCoeff}
    \Sing(\mathcal{J})\cap V=\Sing(\Coeff_{V}(\mathcal{J}))
\end{equation}

Moreover, equality \ref{EqCoeff} is preserved by 
transformations.
To be more precise, set \( \mathcal{A}=\Coeff_{V}(\mathcal{J}) \) and consider a
sequence of transformations (\ref{DefTransformJ}):
\begin{equation} \label{SeqTransIntersec}
    \begin{array}{rcccccccc}
        W=W_{0} & \longleftarrow & W_{1} & \longleftarrow & W_{2} & \longleftarrow & 
	\cdots & \longleftarrow & W_{N}  \\
         &  & H_{1} &  & H_{2} &  &  &  & H_{N}  \\
        \mathcal{J}=\mathcal{J}_{0} &  & \mathcal{J}_{1} &  & \mathcal{J}_{2} &  &  &  &
	\mathcal{J}_{N} \\
	V=V_{0} & \longleftarrow & V_{1} & \longleftarrow & V_{2} & \longleftarrow & 
	\cdots & \longleftarrow & V_{N} \\
	\mathcal{A}=\mathcal{A}_{0} &  & \mathcal{A}_{1} &  &
	\mathcal{A}_{2} &  &  &  & \mathcal{A}_{N}
    \end{array}
\end{equation}
where for any \( i=1,\ldots,N \),
\begin{itemize}
    \item \( \Pi_{i}:W_{i}\to W_{i-1} \) is a monoidal transformation
    with center \( C_{i-1}\subset \Sing(\mathcal{J}_{i-1})\cap
    V_{i-1}\subset W_{i-1} \),

    \item  \( H_{i}\subset W_{i} \) is the exceptional divisor of \(
    \Pi_{i} \),

    \item  \( \mathcal{J}_{i}=I(H_{i})^{-1}\mathcal{J}_{i-1}^{*} \) is the
    transform of \( \mathcal{J}_{i-1} \),

    \item  \( V_{i} \) is the strict transform of \( V_{i-1} \) and
    
    \item \( A_{i}=I(H_{i}\cap V_{i})^{-1}\mathcal{A}_{i-1}^{*}  \) is the 
    transform of \( \mathcal{A}_{i-1} \).
\end{itemize}
Then we have the equality
\begin{equation*}
    \Sing(\mathcal{J}_{N})\cap V_{N}=\Sing(\mathcal{A}_{N})
\end{equation*}
in fact inductively we have
\begin{equation*}
    \Sing(\mathcal{J}_{i})\cap V_{i}=\Sing(\mathcal{A}_{i}), \qquad
    i=0,\ldots,N
\end{equation*}
This fact follows from \ref{LemaGiraud}.
\end{parrafo}

\begin{theorem} \label{LemaGiraud}
(Giraud)
Let \( \mathcal{J} \) be a \( \mathbb{Q} \)-Rees algebra over \( W \) 
and \( W'\to W \) be a monoidal transformation with center \(
C\subset\Sing(\mathcal{J}) \).

Set \( \mathcal{D}=\Diff(\mathcal{J}) \) the differential saturation
of \( \mathcal{J} \).
Recall that \( \Sing(\mathcal{J})=\Sing(\mathcal{D}) \) by \ref{CoroSingDiff}.
Denote by \( \mathcal{J}'=I(H)^{-1}\mathcal{J}^{*}  \)
and \( \mathcal{D}'=I(H)^{-1}\mathcal{D}^{*}  \) the transforms of \(
\mathcal{J} \) and \( \mathcal{D} \), respectively.
Then 
\begin{equation*}
    \mathcal{J}'\subset \mathcal{D}'\subset \Diff(\mathcal{J}')
\end{equation*}
\end{theorem}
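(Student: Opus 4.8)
The plan is to verify both inclusions on generators, working locally on $W'$, since differential saturation, transforms, and the containments are all local and trivial away from the exceptional divisor $H$ (where $\pi$ is an isomorphism). Fix a point of $H$, choose a regular system of parameters $x_1,\dots,x_n$ on $W$ adapted to the center, so that $C=V(x_1,\dots,x_c)$, and use the standard chart of the blow-up in which $y_1=x_1$, $x_j=y_1y_j$ for $2\le j\le c$, and $x_j=y_j$ for $j>c$, so that $I(H)=(y_1)=(x)$. The first inclusion $\mathcal J'\subset\mathcal D'$ is then immediate: since $\mathcal J\subset\mathcal D$ and, by \ref{CoroSingDiff}, $C\subset\Sing(\mathcal J)=\Sing(\mathcal D)$, the two algebras are divided by the same power of $x$ at each level, so if $f_iT^{a_i}$ generate $\mathcal J$ with $f_i^{*}=x^{a_i}g_i$, then the generators $g_iT^{a_i}$ of $\mathcal J'$ are precisely the transforms of the elements $f_iT^{a_i}\in\mathcal D$, and hence lie in $\mathcal D'$.

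For the second inclusion I would exploit the behaviour of differential operators under $\pi$. A chain-rule computation gives $\partial_{x_j}^{\sharp}=y_1^{-1}\partial_{y_j}$ for $2\le j\le c$, $\partial_{x_1}^{\sharp}=\partial_{y_1}-\sum_{j=2}^{c}(y_j/y_1)\partial_{y_j}$, and $\partial_{x_j}^{\sharp}=\partial_{y_j}$ for $j>c$. The crucial observation is that after clearing the factor $x=y_1$ these become vector fields tangent to $H$, namely $y_1\partial_{y_1}-\sum_{j\ge 2}y_j\partial_{y_j}$ and $\partial_{y_j}$. Writing $\Diff^m_{W'}(\log H)$ for the sheaf of operators generated by such logarithmic fields, one obtains for every $D\in\Diff^m_W$ a factorisation $D^{\sharp}=x^{-m}\Phi$ with $\Phi\in\Diff^m_{W'}(\log H)\subset\Diff^m_{W'}$; here moving each $x^{-1}$ to the left past a logarithmic field only costs a correction by a regular function, because a logarithmic field $\theta$ satisfies $\theta(x)\in(x)$.

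Now take a generator $D(f_i)T^{a_i-m}$ of $\mathcal D=\Diff(\mathcal J)$ with $m=\ord(D)\le\lfloor a_i\rfloor$. Using the naturality identity $(D(f_i))^{*}=D^{\sharp}(f_i^{*})=x^{-m}\Phi(x^{a_i}g_i)$ together with $\Phi(x^{a}g)=x^{a}\Psi(g)$, where $\Psi:=x^{-a}\Phi x^{a}\in\Diff^m_{W'}(\log H)$ since logarithmic operators are stable under conjugation by $x^{a}$ (one checks $x^{-a}\theta x^{a}=\theta+a\,u$ when $\theta(x)=xu$), I obtain
\[
    x^{-(a_i-m)}(D(f_i))^{*}=\Psi(g_i).
\]
Because $\Psi\in\Diff^{\le m}_{W'}$ with $m\le\lfloor a_i\rfloor$ and $g_iT^{a_i}$ generates $\mathcal J'$, the right-hand side lies in $\Diff(\mathcal J')_{a_i-m}$. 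Hence the transform of every generator of $\mathcal D$ belongs to $\Diff(\mathcal J')$, which proves $\mathcal D'\subset\Diff(\mathcal J')$.

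The main obstacle is controlling the powers of $x$ in the last step: one must check that the pole $x^{-m}$ created by passing $D$ through $\pi$ is cancelled exactly by the factor $x^{a_i}$ coming from $f_i^{*}=x^{a_i}g_i$, and that conjugating $\Phi$ by $x^{a_i}$ introduces no new pole along $H$. Both facts reduce to the single geometric input that the transformed operators are tangent to $H$: this tangency is what yields the clean factorisation $D^{\sharp}=x^{-m}\Phi$ and what makes $x^{-a}\Phi x^{a}$ a genuine logarithmic operator rather than one with poles. Establishing this stability for arbitrary products of logarithmic fields, and hence for all of $\Diff^m_{W'}(\log H)$, is the technical heart of Giraud's lemma.
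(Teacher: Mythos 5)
Your strategy is the right one, and it is in fact the argument that the paper itself outsources: the paper's ``proof'' of Theorem~\ref{LemaGiraud} is only the citation to \cite{EncinasVillamayor2007}, and the argument given there (see also \cite{Villamayor2008}) is precisely this local factorisation of pulled-back operators into a pole \( x^{-m} \) times an operator tangent to \( H \). Your first inclusion \( \mathcal{J}'\subset\mathcal{D}' \) is correct (both algebras are divided by \( I(H)^{\lceil a\rceil} \) at level \( a \), so the inclusion passes to transforms), and your chain-rule computation, the commutation rule \( \theta x^{-1}=x^{-1}(\theta-u) \) for \( \theta(x)=xu \), and the conjugation identity \( x^{-a}\theta x^{a}=\theta+au \) are all correct and give a complete proof of the second inclusion \emph{in characteristic zero}.

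The gap is in the step ``one obtains for every \( D\in\Diff^{m}_{W} \) a factorisation \( D^{\sharp}=x^{-m}\Phi \)''. You establish this only for operators that are \( \OO_{W} \)-combinations of compositions of the first-order operators \( \partial_{x_{j}} \), and that such operators exhaust \( \Diff^{m}_{W} \) is true only in characteristic zero. Theorem~\ref{LemaGiraud} is stated, and is used by the paper (the stability \ref{SeqTransIntersec} is claimed to be ``valid in general''), over a field of arbitrary characteristic, and there the sheaf \( \Diff^{m}_{W} \) entering Definition~\ref{DefDiffSat} contains the divided-power (Hasse) operators: in characteristic \( p \) the operator \( \Delta^{(p)}=\frac{1}{p!}\partial_{x}^{\,p} \), characterised by \( \Delta^{(p)}(x^{n})=\binom{n}{p}x^{n-p} \), lies in \( \Diff^{p}_{W} \) but not in the subring generated by \( \OO_{W} \) and derivations (indeed \( \partial_{x}^{\,p}=0 \) as an operator, while \( \Delta^{(p)}(x^{p})=1 \)). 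These operators are exactly what make \( \Diff(\mathcal{J}) \) strictly larger than its saturation by derivations, so omitting them is not a technicality: your argument proves the theorem only for the derivation-saturation, i.e.\ only in characteristic zero. To obtain the general statement one must verify the factorisation \( D^{\sharp}=x^{-m}\Phi \), with \( \Phi \) of order \( \leq m \) and tangent to \( H \), directly on the local basis \( \{\Delta^{\alpha}\}_{|\alpha|\leq m} \) of divided-power operators (e.g.\ via the Taylor morphism), which is what the cited reference does; it does not follow formally from the order-one case by taking products. Also, contrary to your closing remark, the stability of products of logarithmic fields is the easy part (your two identities already give it); the genuine technical heart of Giraud's lemma is this positive-characteristic verification.
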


\begin{proof}
    See \cite{EncinasVillamayor2007}.
\end{proof}

A key fact for proving resolution of singularities in characteristic
zero is the existence (locally) of hypersurfaces \( V\subset W \) such
that the singular locus of a simple \( \mathbb{Q} \)-Rees algebra \(
\mathcal{J} \) (\ref{DefSimpleQRA}) is included in \( V \).
\begin{theorem} \label{ThContMaxLocal}
Assume that characteristic of the ground field \( k \) is zero.

Let \( \mathcal{J} \) be a simple \( \mathbb{Q} \)-Rees algebra
(\ref{DefSimpleQRA}) over \( W \) and \( \xi\in\Sing(\mathcal{J}) \).
There is an open set \( \xi\in U\subset W \) and a smooth hypersurface \( 
\xi\in V\subset U \) such that
\begin{equation} \label{EqContMaxProp}
    I_{U}(V)T\subset \Diff(\mathcal{J})|_{U}
\end{equation}
where \( I_{U}(V) \) denotes the ideal sheaf defined by \( V \) in \( 
U \).
\end{theorem}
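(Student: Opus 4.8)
The plan is to manufacture, on a small neighborhood $U$ of $\xi$, a single function $v$ with $\ord_\xi(v)=1$ whose zero locus $V=\{v=0\}$ is a smooth hypersurface through $\xi$ and which satisfies $vT\in\Diff(\mathcal{J})$. Once this is done the conclusion is immediate: the degree-one component of $\Diff(\mathcal{J})$ is an ideal, so from $v$ lying in it we get $(v)\subset$ that ideal, i.e. $I_U(V)T=(v)T\subset\Diff(\mathcal{J})|_U$. The whole point is thus to produce the right $v$, and $v$ will be obtained as a high-order derivative of a carefully chosen generator of $\mathcal{J}$; this derivation step is exactly where the hypothesis $\operatorname{char} k=0$ is indispensable.

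First I would locate a good generator. Since $\mathcal{J}$ is simple and $\xi\in\Sing(\mathcal{J})$, we have $\ord(\mathcal{J})(\xi)=1$. Choosing generators $f_{1}T^{a_{1}},\ldots,f_{r}T^{a_{r}}$ of $\mathcal{J}_{\xi}$ and using the formula $\ord(\mathcal{J})(\xi)=\min_{i}\ord_\xi(f_{i})/a_{i}$, some index $i_{0}$ realizes the minimum, so $\ord_\xi(f_{i_{0}})=a_{i_{0}}$. As $\ord_\xi(f_{i_{0}})$ is a nonnegative integer, $a:=a_{i_{0}}$ is a positive integer and $f:=f_{i_{0}}$ satisfies $\ord_\xi(f)=a$. (The equality of the quotient with $1$ is precisely what forces $a$ to be integral, and integrality of $a$ is what I will need to stay within the generator description of $\Diff(\mathcal{J})$.)

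Next I would lower the order by differentiating. Fix a regular system of parameters at $\xi$ and pass to the initial form $\In_\xi(f)$, a nonzero homogeneous form of degree $a$ in the associated graded ring. In characteristic zero a nonzero form of positive degree always admits a nonzero first partial derivative, so iterating $a-1$ times produces a composition $D$ of first-order partials — a differential operator of order $a-1$ — with $D(\In_\xi(f))$ a nonzero linear form. Lifting $D$ to $\OO_{W,\xi}$ as the corresponding operator in $\Diff_W^{a-1}$, a symbol computation gives $\In_\xi(D(f))=D(\In_\xi(f))\neq 0$, which is linear; hence $v:=D(f)$ has $\ord_\xi(v)=1$ and $v\in\mathfrak{m}_\xi\setminus\mathfrak{m}_\xi^{2}$. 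Shrinking to an open $U\ni\xi$, the locus $V=\{v=0\}$ is then a smooth hypersurface containing $\xi$.

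Finally I would read off membership from differential saturation. Because $a-1\leq\lfloor a\rfloor=a$ and $fT^{a}\in\mathcal{J}$, the generator description of $\Diff(\mathcal{J})$ (equivalently, the inclusion $\Diff_W^{a-1}(J_{a})\subset J_{a-(a-1)}=J_{1}$) shows that $vT=D(f)T^{a-(a-1)}\in\Diff(\mathcal{J})$, completing the argument as explained above. The only genuine obstacle is the order-dropping step of the third paragraph: the existence of an order $a-1$ operator that outputs a smooth, order-one hypersurface is exactly the phenomenon of maximal contact, and it fails in positive characteristic — for instance, for $f=x^{p}$ in characteristic $p$ every partial derivative vanishes, so no differential operator decreases the order and no such $v$ exists. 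This is why the statement is restricted to $\operatorname{char} k=0$, and I would expect to spend most of the write-up making the initial-form/symbol bookkeeping of this step precise.
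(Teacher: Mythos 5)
Your proposal is correct and follows essentially the same route as the paper's own proof: pick an element \( fT^{a}\in\mathcal{J} \) with \( \ord_{\xi}(f)=a\in\mathbb{Z} \), use characteristic zero to produce \( D\in\Diff_{W}^{a-1} \) with \( \ord_{\xi}(D(f))=1 \), observe \( D(f)T\in\Diff(\mathcal{J}) \), and take \( V=\{D(f)=0\} \) on a suitable neighborhood \( U \). The only difference is that you spell out details the paper leaves implicit (why \( a \) is forced to be an integer, the initial-form/Euler-identity argument for the existence of \( D \), and the passage from \( D(f)T\in\Diff(\mathcal{J}) \) to \( I_{U}(V)T\subset\Diff(\mathcal{J})|_{U} \)).
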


\begin{proof}
See \cite{EncinasVillamayor2007}.
The algebra \( \mathcal{J} \) is simple, so that there is an equation \( 
fT^{b}\in\mathcal{J} \) with \( \ord(f)=b\in\mathbb{Z} \).
Since we are in characteristic zero, there is a differential operator 
\( D \) of order \( b-1 \) such that \( D(f) \) has order one.
Note that \( D(f)T\in\Diff(\mathcal{J}) \).
At a suitable neighborhood the equation \( D(f) \) defines a smooth 
hypersurface \( V \).
\end{proof}

Theorem \ref{ThContMaxLocal} does not hold in positive
characteristic case and this is one of the main obstructions to find a
proof in the general case.

\begin{parrafo} \label{ContMaxEstable}
Let \( \mathcal{J} \) be a simple \( \mathbb{Q} \)-Rees algebra over \( W \).
Assume that there exists a smooth hypersurface \( V\subset W \) such that
\( I(V)T\subset \Diff(\mathcal{J}) \).
It follows from \ref{ThContMaxLocal} that this assumption may be always satisfied
in all open sets of a suitable open covering of \( W \).

Note that \( I(V)T\subset \Diff(\mathcal{J}) \) implies that \(
\Sing(\mathcal{J})\subset V \).
Set \( \mathcal{A}=\Coeff_{V}(\mathcal{J}) \), by \ref{EqCoeff} we have \(
\Sing(\mathcal{J})=\Sing(\mathcal{A}) \). In fact for a sequence of
transformations as in \ref{SeqTransIntersec} we have for any \(
i=1,\ldots,N \):
\begin{itemize}
    \item  \( I(V_{i})T\subset\Diff(\mathcal{J}_{i}) \),
    
    \item \( \Sing(\mathcal{J}_{i})\subset V_{i} \) and

    \item  \( \Sing(\mathcal{J}_{i})=\Sing(\mathcal{A}_{i}) \).
\end{itemize}
\end{parrafo}

Theorem \ref{ThContMaxLocal} allows to choose hypersurfaces \( V \)
with the property \ref{EqContMaxProp}. This is the inductive argument in 
the proof of resolution of singularities. We replace the simple algebra \(
\mathcal{J} \) in \( W \) with the algebra \( \Coeff_{V}(\mathcal{J}) \)
in \( V \).
But we have to prove that the total procedure will be independent of
the choice of \( V \).
This problem was originally solved by Hironaka, by considering an
equivalence relation of objects and proving that the procedure of
resolution depends only on the equivalence class of the algebra \(
\mathcal{J} \).

An alternative path is the solution given by W{\l}odarczyk
\cite{Wlodarczyk2005}.
\begin{theorem} \label{ThJarek}
Let \( \mathcal{J} \) be a simple \( \mathbb{Q} \)-Rees algebra. Fix a
point \( \xi\in\Sing(\mathcal{J}) \).
Assume that \( V_{1} \) and \( V_{2} \) are two smooth hypersurfaces
of \( W \), \( \xi\in V_{1}\cap V_{2} \).
Then there is an \'etale neighborhood \( U \) of \( \xi \) in \( W \) 
and an automorphism \( \varphi:U\to U \) such that
\begin{equation*}
    \varphi(\Diff(\mathcal{J})|_{U})=\Diff(\mathcal{J})|_{U},
    \qquad
    \varphi(V_{1})=V_{2}
\end{equation*}
\end{theorem}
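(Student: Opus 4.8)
The plan is to prove this étale-locally by exhibiting $\varphi$ as the time-one flow (an exponential of a derivation) of a vector field that is built from $\Diff(\mathcal{J})$ itself and therefore preserves it; this is the homogenization technique of \cite{Wlodarczyk2005} in the form used in \cite{EncinasVillamayor2007}. First I would pass to the completion \( \hat{\OO}_{W,\xi} \), recalling from the setting of \ref{ContMaxEstable} that \( V_1,V_2 \) are maximal contact, i.e. \( I(V_1)T, I(V_2)T\subset\mathcal{D} \) where \( \mathcal{D}=\Diff(\mathcal{J})=\oplus_a D_aT^a \). I choose a regular system of parameters \( x_1,\ldots,x_n \) with \( V_1=V(x_1) \) and \( V_2=V(y) \), so that \( x_1T,\,yT\in\mathcal{D} \) with \( \ord_\xi(x_1)=\ord_\xi(y)=1 \); put \( z=y-x_1\in D_1 \). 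Since \( \mathcal{D} \) is differentially saturated we have \( D_1\subset\mathfrak{m}_\xi \), so all these generators vanish at \( \xi \).

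The key elementary observation is that differential saturation forces \( \mathcal{D} \) to be stable under a large family of derivations. For any \( g\in D_1 \) and any index \( i \), the \( k \)-derivation \( \delta=g\,\partial_i \) preserves \( \mathcal{D} \): if \( fT^a\in\mathcal{D} \) with \( a\geq 1 \) then \( \partial_i(f)\in D_{a-1} \) by \ref{DefDiffSat}, whence \( g\,\partial_i(f)\in D_1 D_{a-1}\subset D_a \); and for \( a\leq 1 \) one has \( g\in D_1\subset D_a \), so \( g\,\partial_i(f)\in D_a \) because \( D_a \) is an ideal. Hence \( \delta(D_a)\subset D_a \) for all \( a \), and the same holds for any \( \delta=\sum_i g_i\partial_i \) with \( g_i\in D_1 \). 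Consequently, whenever it converges, \( \exp(\delta) \) is a \( k \)-algebra automorphism preserving \( \mathcal{D} \). This reduces the theorem to producing such a \( \delta \), assembled from elements of \( D_1 \), whose exponential is a well-defined étale-local automorphism with \( \varphi^\ast(y)=x_1 \) (so that \( \varphi^{-1}(V_2)=V_1 \), i.e. \( \varphi(V_1)=V_2 \)).

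The obstruction is convergence and matching of tangent directions. The naive choice \( \delta=-z\,\partial_y \) (realizing the translation \( y\mapsto y-z=x_1 \)) need not be topologically nilpotent, since \( z\in D_1\subset\mathfrak{m}_\xi \) only guarantees \( \delta(\mathfrak{m}^k)\subset\mathfrak{m}^k \), not a strict increase of order; and one cannot simply change coordinates to align the linear parts of \( x_1 \) and \( y \), because an arbitrary linear change need not preserve \( \mathcal{D} \). This is exactly what Włodarczyk's homogenization \( \mathcal{H}(\mathcal{D}) \) is designed to circumvent: one enlarges \( \mathcal{D} \) to a \( \mathbb{Q} \)-Rees algebra that is still differentially saturated, has the \emph{same} order-one, degree-one part (hence the same maximal-contact hypersurfaces \( x_1,y \)), but is in addition invariant under the whole formal group of translations in a transversal parameter. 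Working inside \( \mathcal{H}(\mathcal{D}) \), the translation \( y\mapsto x_1 \) is realized by an honest one-parameter subgroup whose infinitesimal generator is a convergent derivation of the admissible form \( \sum_i g_i\partial_i \), \( g_i\in D_1 \); I take \( \varphi \) to be its time-one map.

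Finally I would verify that \( \varphi \) does the job: by construction \( \varphi(\mathcal{H}(\mathcal{D}))=\mathcal{H}(\mathcal{D}) \) and \( \varphi^\ast(y)=x_1 \), so \( \varphi(V_1)=V_2 \); and since \( \varphi \) preserves the degree-one maximal-contact data shared by \( \mathcal{H}(\mathcal{D}) \) and \( \mathcal{D} \), together with differential saturation, one recovers \( \varphi(\mathcal{D})=\mathcal{D} \), that is \( \varphi(\Diff(\mathcal{J})|_U)=\Diff(\mathcal{J})|_U \). The resulting formal automorphism of \( \hat{\OO}_{W,\xi} \) is then algebraized over an étale neighbourhood \( U \) of \( \xi \) by Artin approximation, which justifies the passage back from the completion. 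I expect the genuine difficulty to lie entirely in the construction and control of the homogenization \( \mathcal{H}(\mathcal{D}) \) — proving that it is again a finitely generated \( \mathbb{Q} \)-Rees algebra with unchanged maximal contact and that it is preserved by the exponential automorphism — whereas the derivation bookkeeping of the second paragraph and the final algebraization step are routine once the homogenization is available.
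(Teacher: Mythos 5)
Your second paragraph contains the right ingredient, but the proof as structured has a genuine gap: its engine is the homogenization \( \mathcal{H}(\mathcal{D}) \), which you never construct, together with the transfer claim that \( \varphi(\mathcal{H}(\mathcal{D}))=\mathcal{H}(\mathcal{D}) \) ``recovers'' \( \varphi(\mathcal{D})=\mathcal{D} \). You yourself concede that ``the genuine difficulty'' lies in the construction and control of \( \mathcal{H}(\mathcal{D}) \), so the proposal defers precisely the part that constitutes the proof. Worse, the transfer step is not routine and, as stated, is unsupported: in W{\l}odarczyk's own setting the substitution automorphism preserves only the homogenized object, not the original one (this is exactly why he resolves the homogenization rather than the ideal he started with), so deducing invariance of \( \mathcal{D} \) from invariance of \( \mathcal{H}(\mathcal{D}) \) would need an argument specific to \( \mathcal{D} \) --- and once one has that argument, the homogenization is superfluous.

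What you miss is that \( \mathcal{D}=\Diff(\mathcal{J}) \) is \emph{already} differentially saturated, and saturation plays exactly the role you want \( \mathcal{H}(\mathcal{D}) \) to play; moreover no exponential flow, hence no convergence problem, arises if \( \varphi \) is defined as a substitution rather than as a time-one map. This is the paper's proof: write \( I(V_{i})=(u_{i}) \), choose \( x_{2},\ldots,x_{d} \) so that \( u_{1},x_{2},\ldots,x_{d} \) and \( u_{2},x_{2},\ldots,x_{d} \) are both regular systems of parameters, and let \( \varphi^{\sharp} \) be the automorphism of \( \hat{\OO}_{W,\xi} \) sending \( u_{1}\mapsto u_{2} \) and fixing \( x_{2},\ldots,x_{d} \); substituting one regular system of parameters for another is well defined with no nilpotence hypothesis whatsoever. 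For \( fT^{a}\in\mathcal{D} \), with \( f=F(u_{1},x_{2},\ldots,x_{d}) \) and \( h=u_{2}-u_{1} \), the Taylor expansion
\begin{equation*}
    \varphi^{\sharp}(f)=F(u_{1}+h,x_{2},\ldots,x_{d})=
    \sum_{i\geq 0}h^{i}\frac{1}{i!}\dern{F}{u_{1}}{i}(u_{1},x_{2},\ldots,x_{d})
\end{equation*}
has every summand in degree \( a \) of \( \Diff(\hat{\mathcal{J}}) \): one has \( h\in D_{1} \) because \( u_{1}T,u_{2}T\in\mathcal{D} \) and \( D_{1} \) is an ideal; differential saturation gives \( \dern{F}{u_{1}}{i}\,T^{a-i}\in\Diff(\hat{\mathcal{J}}) \) for \( 0\leq i<a \), so \( h^{i}T^{i}\cdot\dern{F}{u_{1}}{i}T^{a-i} \) lies in degree \( a \) (for \( i\geq a \) use \( h^{i}\in D_{i}\subset D_{a} \) and that \( D_{a} \) is an ideal). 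Since ideals of the complete local ring are \( \mathfrak{m}_{\xi} \)-adically closed, the infinite sum stays in degree \( a \), so \( \varphi^{\sharp} \) preserves \( \Diff(\hat{\mathcal{J}}) \), and one then descends to an \'etale neighbourhood. Your derivation computation (that \( g\partial_{i} \) with \( g\in D_{1} \) preserves \( \mathcal{D} \)) is precisely the infinitesimal shadow of this argument; the convergence obstruction you then try to circumvent with \( \mathcal{H}(\mathcal{D}) \) is an artifact of insisting that \( \varphi \) be \( \exp(\delta) \), and the detour leaves the theorem unproven.
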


\begin{proof}
We repeat the proof of \cite{Wlodarczyk2005} in terms of \( 
\mathbb{Q} \)-Rees algebras.

Set \( I(V_{i})=(u_{i}) \) for some \( u_{i}\in\OO_{W,\xi} \), \(
i=1,2 \). There are \( x_{2},\ldots,x_{d}\in\OO_{W,\xi} \) such that \( 
u_{1},x_{2},\ldots,x_{d} \) and \( u_{2},x_{2},\ldots,x_{d} \) are
both regular systems of parameters at \( \OO_{W,\xi} \).

Consider the automorphism \( \varphi^{\sharp} \) of \(
\hat{\OO}_{W,\xi} \) sending \( u_{1} \) to \( u_{2} \) and fixing \(
x_{2},\ldots,x_{d} \).  Note that this automorphism can be lifted to a suitable
\'etale neighborhood. But, for simplification, we will consider all ideals 
in the completion \( \hat{\OO}_{W,\xi} \).
Denote \( \hat{\mathcal{J}}=\mathcal{J}\GrQ(\hat{\OO}_{W,\xi}) \).

Let \( fT^{a}\in\Diff(\mathcal{J})_{\xi} \).  Consider the image of
\( f \) in the completion \( f\in\hat{\OO}_{W,\xi} \), note that \( f
\) is a power series \( f=F(u_{1},x_{2},\ldots,x_{d}) \), then \(
\varphi^{\sharp}(F(u_{1},x_{2},\ldots,x_{d}))=
F(u_{2},x_{2},\ldots,x_{d}) \).
By assumption \(
u_{i}T\in\Diff(\hat{\mathcal{J}}) \), \( i=1,2
\), set \( h=u_{2}-u_{1} \).  We have \(
F(u_{2},x_{2},\ldots,x_{d})=F(u_{1}+h,x_{2},\ldots,x_{d}) \) and
\begin{equation*}
    F(u_{1}+h,x_{2},\ldots,x_{d})=
    \sum_{i=0}^{\infty}
    h^{i}\frac{1}{i!}\frac{\partial^{i}F}{\partial u_{1}^{i}}
    (u_{1},x_{2},\ldots,x_{d})
\end{equation*}
Now note that \( h^{i}T^{i}\in\Diff(\hat{\mathcal{J}})
\) for all \( i\geq 0 \), and \(
FT^{a}\in\Diff(\hat{\mathcal{J}}) \) implies that \(
\dfrac{\partial^{i}F}{\partial u_{1}^{i}}T^{a-i}\in
\Diff(\hat{\mathcal{J}}) \) for \( 0\leq i<a \).  We
conclude \(
\varphi^{\sharp}(f)T^{a}\in\Diff(\hat{\mathcal{J}}) \)
and then \( \varphi^{\sharp}(f)T^{a}\in \Diff(\mathcal{J})_{\xi} \)
at a suitable \'etale neighborhood.
\end{proof}

\section{Elimination algebras.} \label{SeccElimin}

Villamayor has introduced in the paper \cite{Villamayor2007} the 
concept of elimination algebra.
The coefficient algebra is defined for an inclusion \( V\subset W \)
and the elimination algebra will be defined for a smooth morphism \(
W\to V \).
In characteristic zero both algebras will encode the same information.

\begin{definition} \label{DefElimAlg}
Let \( \mathcal{J} \) be a \( \mathbb{Q} \)-Rees algebra over \( W \),
with pure dimension \( d=\dim(W) \).
Let \( V \) a regular algebraic variety of pure dimension \( d-1 \) and \( 
\beta:W\to V \) be a smooth morphism.

Consider the natural sheaf homomorphism \( \OO_{V}\to\OO_{W} \), which
induces an homomorphism \( \GrQ(\OO_{V})\to\GrQ(\OO_{W}) \).
The elimination algebra of \( \mathcal{J} \) in \( V \) is
\begin{equation*}
    \mathcal{R}_{V}(\mathcal{J})=\mathcal{J}\cap\GrQ(\OO_{V})
\end{equation*}
\end{definition}

The elimination algebra has good properties when the algebra \(
\mathcal{J} \) is simple and differentially saturated.
In fact in \cite{Villamayor2007} the elimination algebra is only
defined for simple algebras saturated by the relative differentials with
respect to the morphism \( \beta \). Note that in \cite{Villamayor2007}
the elimination algebra is constructed using a universal algebra in
terms of symmetric polynomial. This way properties related to integral 
closure can be proved.

\begin{theorem} \label{ThBetaSing}
\cite{Villamayor2007}
Let \( \mathcal{J} \) be a simple \( \mathbb{Q} \)-Rees algebra over \( 
W \), and let \( \beta:W\to V \) be a smooth morphism, \(
\dim{V}=\dim{W}-1 \).
Assume that \( \mathcal{J}=\Diff(\mathcal{J}) \) then
\begin{equation*}
    \Sing(\mathcal{R}_{V}(\mathcal{J}))=\beta(\Sing(\mathcal{J}))
\end{equation*}
Moreover, \( \beta \) is 1-1 between the points of 
\( \Sing(\mathcal{R}_{V}(\mathcal{J})) \) and 
\( \beta(\Sing(\mathcal{J})) \).
\end{theorem}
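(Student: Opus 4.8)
The plan is to work locally, reduce $\mathcal{J}$ along the fibres of $\beta$ to a single generator in Weierstrass form, and then read off both the set equality and the injectivity from the fact that a monic polynomial of degree $b$ can vanish to order $b$ at only one point. First I would fix $\xi\in\Sing(\mathcal{J})$ with image $\eta=\beta(\xi)$. Since $\beta$ is smooth of relative dimension one, there is (\'etale locally) a fibre coordinate $z$ completing a regular system of parameters $z,y_{1},\dots,y_{d-1}$ of $\OO_{W,\xi}$, with $y_{1},\dots,y_{d-1}$ coming from $\OO_{V,\eta}$ and $\beta$ the projection forgetting $z$. As $\mathcal{J}$ is simple there is a homogeneous generator $fT^{b}\in\mathcal{J}$ with $\ord_{\xi}(f)=b$, and since $\mathcal{J}=\Diff(\mathcal{J})$ the elements $\frac{\partial^{i}f}{\partial z^{i}}T^{b-i}$ lie in $\mathcal{J}$ for $0\le i<b$. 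After a Tschirnhausen-type change of the fibre coordinate making $z$ transversal (so that $f$ restricted to $\beta^{-1}(\eta)$ has order exactly $b$), Weierstrass preparation writes $f=u\cdot\bigl(z^{b}+c_{1}z^{b-1}+\cdots+c_{b}\bigr)$ with $u$ a unit and $c_{i}\in\OO_{V,\eta}$ satisfying $\ord_{\eta}(c_{i})\ge i$.

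Next I would identify the intersection $\mathcal{R}_{V}(\mathcal{J})=\mathcal{J}\cap\GrQ(\OO_{V})$ with the elimination (symmetric-function) algebra attached to the coefficients $c_{i}$. This is the content imported from \cite{Villamayor2007}: up to integral closure, the elements of $\mathcal{J}$ lying in $\GrQ(\OO_{V})$ are exactly those built from the $c_{i}$, equivalently from the resultant and discriminant of the universal monic polynomial, so that $\ord(\mathcal{R}_{V}(\mathcal{J}))(\eta)$ is governed by the weighted orders of the $c_{i}$. With this description, $\eta\in\Sing(\mathcal{R}_{V}(\mathcal{J}))$ holds precisely when the fibre polynomial $z^{b}+c_{1}(\eta)z^{b-1}+\cdots+c_{b}(\eta)$ has a root of multiplicity $b$ over $\overline{k(\eta)}$, which is exactly the condition that some point of $\beta^{-1}(\eta)$ lies in $\Sing(\mathcal{J})$. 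This yields both inclusions in $\Sing(\mathcal{R}_{V}(\mathcal{J}))=\beta(\Sing(\mathcal{J}))$ at once.

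The injectivity of $\beta$ on $\Sing(\mathcal{J})$ then falls out of the same normal form. The restriction of $f$ to a fibre $\beta^{-1}(\eta)$ is, up to a unit, a monic polynomial of degree $b$ in $z$, and such a polynomial can vanish to order $b$ at only one value of $z$, namely when it equals $(z-z_{0})^{b}$. Since the points of $\Sing(\mathcal{J})\cap\beta^{-1}(\eta)$ are exactly the fibre points where $f$ vanishes to order $b$, there is at most one of them over each $\eta$; combined with the surjectivity onto $\Sing(\mathcal{R}_{V}(\mathcal{J}))$ already established, $\beta$ restricts to a bijection between $\Sing(\mathcal{J})$ and $\Sing(\mathcal{R}_{V}(\mathcal{J}))$.

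I expect the main obstacle to be the middle step: making precise, and matching, the two descriptions of the elimination algebra—the intrinsic intersection $\mathcal{J}\cap\GrQ(\OO_{V})$ on one side and the explicit symmetric-function algebra in the $c_{i}$ on the other—together with securing the transversality needed for Weierstrass preparation. The order bookkeeping ($\ord_{\eta}(c_{i})\ge i$ and the resulting $\ord(\mathcal{R}_{V}(\mathcal{J}))(\eta)\ge 1$) is routine once the normal form is in place, and the injectivity is then essentially free.
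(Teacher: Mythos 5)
First, a remark on what there is to compare: the paper gives no proof of this theorem at all --- it is quoted from \cite{Villamayor2007}, where the elimination algebra is built from a universal monic polynomial and its symmetric functions \emph{under a transversality hypothesis on \( \beta \)}. Your outline follows that same route, but two of its steps fail as written. The first is the claim that transversality can be arranged by a ``Tschirnhausen-type change of the fibre coordinate''. The morphism \( \beta \) is part of the data (the algebra \( \mathcal{R}_{V}(\mathcal{J})=\mathcal{J}\cap\GrQ(\OO_{V}) \) depends on it), so the only changes available to you are those preserving \( \beta \), i.e. \( z\mapsto uz+g \) with \( u \) a unit and \( g \) pulled back from \( V \); such changes can never create transversality. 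Concretely, take \( W=\Spec(k[x,y]) \), \( V=\Spec(k[y]) \), \( \beta \) the projection forgetting \( x \), and \( \mathcal{J} \) the \( \mathbb{Q} \)-Rees algebra generated by \( yT \): it is simple and differentially saturated, yet every element of every \( J_{a} \) restricts to zero on the fibre \( \{y=0\} \), so no Weierstrass preparation of the required degree exists for any choice of fibre coordinate. Worse, the ``moreover'' part of the statement fails for this pair: \( \Sing(\mathcal{J}) \) is the whole line \( \{y=0\} \), which \( \beta \) collapses to a point. So transversality of \( \beta \) with respect to \( \mathcal{J} \) is an implicit hypothesis imported with the citation; it must be assumed, not manufactured, and your proof nowhere records it.

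Second, the pivotal equivalence in your middle step is false even when transversality holds. You claim that \( \eta\in\Sing(\mathcal{R}_{V}(\mathcal{J})) \) holds precisely when the fibre polynomial \( z^{b}+c_{1}(\eta)z^{b-1}+\cdots+c_{b}(\eta) \) has a root of multiplicity \( b \), and that this is exactly the existence of a singular point of \( \mathcal{J} \) over \( \eta \). But the fibre polynomial only records the \emph{values} \( c_{i}(\eta) \), whereas \( \Sing(\mathcal{R}_{V}(\mathcal{J})) \) is a condition on \emph{orders} at \( \eta \). In characteristic zero take \( W=\Spec(k[z,y_{1},y_{2}]) \), \( V=\Spec(k[y_{1},y_{2}]) \), and let \( \mathcal{J} \) be the differential saturation of the algebra generated by \( (z^{2}-y_{1}y_{2})T^{2} \); then \( \mathcal{J} \) contains \( zT \), \( y_{1}T \), \( y_{2}T \), it is simple, transversal to \( \beta \), and \( \Sing(\mathcal{J}) \) is the origin. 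At \( \eta=(t,0) \) with \( t\neq 0 \) the fibre polynomial is \( z^{2} \), which has a double root, yet \( y_{1}T\in\mathcal{R}_{V}(\mathcal{J}) \) is a unit at \( \eta \), so \( \eta\notin\Sing(\mathcal{R}_{V}(\mathcal{J})) \), and indeed no singular point of \( \mathcal{J} \) lies over \( \eta \) (the Weierstrass coefficient \( c_{2}=-y_{1}y_{2} \) has order \( 1<2 \) at \( \eta \)). Hence ``both inclusions at once'' does not follow from your multiplicity criterion; the inclusion \( \Sing(\mathcal{R}_{V}(\mathcal{J}))\subset\beta(\Sing(\mathcal{J})) \) is precisely the hard content of Villamayor's theorem and requires the full weighted-order structure of the elimination algebra, which your sketch simply imports from \cite{Villamayor2007} --- that is, from the very result being proved. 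By contrast, your injectivity argument (a monic polynomial of degree \( b \) can vanish to order \( b \) at only one point of each fibre) is correct once transversality and the Weierstrass form are actually in place.
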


Unfortunately the equality in \ref{ThBetaSing} does not hold, in 
general, after monoidal transformation.
\begin{example} \label{EjemElimina}
Consider a field \( k \) of characteristic two.
Set \( W=\Spec(k[x,y,z]) \), \( V=\Spec(k[y,z]) \) and \( \beta:W\to V \) 
the usual projection. Consider the \( \mathbb{Q} \)-Rees 
algebra \( \mathcal{J} \) generated by \( (x^{2}+y^{2}z)T^{2} \).
The differential saturation is generated by
\begin{equation*}
    (x^{2}+y^{2}z)T^{2},\ y^{2}T
\end{equation*}
The elimination algebra of the differential saturation
\( \mathcal{A}=\mathcal{R}_{V}(\Diff(\mathcal{J})) \) is 
generated by \( y^{2}T \).

Now consider the blowing-up with center at the origin of \( W \) and 
consider the affine chart where the ideal of the exceptional divisor 
is \( z \).
The transform \( \mathcal{J}_{1} \) of \( \mathcal{J} \) is generated by \( 
(x^{2}+y^{2}z)T^{2} \).
The transform \( \mathcal{A}_{1} \) of the elimination algebra \( 
\mathcal{A} \) is generated by \( y^{2}zT \).
Note that \( \Sing(\mathcal{J}_{1}) \) is a line and \( 
\Sing(\mathcal{A}_{1}) \) is a union of two lines with
\begin{equation*}
    \Sing(\mathcal{J}_{1})\subset \Sing(\mathcal{A}_{1})
\end{equation*}
\end{example}

If the characteristic of \( k \) is zero then equality in
theorem~\ref{ThBetaSing} is stable by monoidal transformation.  In
fact, after an \'etale extension, the coefficient and elimination
algebra are isomorphic.
\begin{theorem} \label{CoeffIsoElim}
Assume that the characteristic of the ground field \( k \) is zero.

Let \( \mathcal{J} \) be a simple \( \mathbb{Q} \)-Rees algebra.
Fix a closed point \( \xi\in\Sing(\mathcal{J}) \).
There is an \'etale neighborhood of \( \xi \), \( U \) in \( W \),
a smooth hypersurface \( V\subset U \) and a retraction \( \beta:U\to 
V \) such that
\begin{equation*}
    \Coeff_{V}(\mathcal{J}|_{U})=\mathcal{R}_{V}(\Diff(\mathcal{J}|_{U}))
\end{equation*}
\end{theorem}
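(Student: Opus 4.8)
The plan is to reduce the statement to a local computation in adapted coordinates and then to combine maximal contact (\ref{ThContMaxLocal}) with differential saturation, along the lines of the Taylor expansion used in the proof of \ref{ThJarek}. First I would produce the data $U,V,\beta$. By \ref{ThContMaxLocal}, since $\mathcal{J}$ is simple there is an (étale) neighborhood $U$ of $\xi$ and a smooth hypersurface $V\subset U$ with $I_{U}(V)T\subset\Diff(\mathcal{J})|_{U}$. Writing $I_{U}(V)=(x)$ and extending $x$ to a regular system of parameters $x,x_{2},\ldots,x_{d}$, after shrinking $U$ étale-locally the projection $\beta\colon U\to V$, $(x,x_{2},\ldots,x_{d})\mapsto(x_{2},\ldots,x_{d})$, is a smooth retraction with $\beta\circ\iota=\mathrm{id}_{V}$, where $\iota\colon V\hookrightarrow U$. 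Set $\mathcal{D}=\Diff(\mathcal{J})|_{U}$ and let $\pi\colon\OO_{U}\to\OO_{V}$ be the restriction $x\mapsto 0$, so $\Coeff_{V}(\mathcal{J}|_{U})=\pi(\mathcal{D})$ is the image of $\mathcal{D}$ in $\GrQ(\OO_{V})$; identifying $\GrQ(\OO_{V})$ with its image under $\beta^{\sharp}$ inside $\GrQ(\OO_{U})$, we have $\mathcal{R}_{V}(\mathcal{D})=\mathcal{D}\cap\GrQ(\OO_{V})$. Note that $xT\in\mathcal{D}$.

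The inclusion $\mathcal{R}_{V}(\mathcal{D})\subseteq\Coeff_{V}(\mathcal{J}|_{U})$ is immediate: if $hT^{a}\in\mathcal{D}$ with $h\in\beta^{\sharp}\OO_{V}$, then $\pi(h)=h$ because $\pi\circ\beta^{\sharp}=\mathrm{id}$, so $hT^{a}=\pi(hT^{a})\in\pi(\mathcal{D})$. The substance is the reverse inclusion. Since $\mathcal{D}$ is finitely generated, say by $f_{1}T^{a_{1}},\ldots,f_{r}T^{a_{r}}$, the algebra $\Coeff_{V}(\mathcal{J}|_{U})$ is generated by the images $\pi(f_{j})T^{a_{j}}$, so it suffices to show $\pi(f_{j})T^{a_{j}}\in\mathcal{D}$ for each $j$. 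Fix one generator, write $f,a$ for $f_{j},a_{j}$, and set $g_{0}=\pi(f)=f|_{x=0}$. The heart of the argument is the formal Taylor identity
\[
    g_{0}\;=\;\sum_{i=0}^{\infty}\frac{(-x)^{\,i}}{i!}\,\partial_{x}^{\,i}f ,
\]
which converges $x$-adically. Each summand lies in $\mathcal{D}_{a}$: for $0\leq i\leq\lfloor a\rfloor$ because $x^{i}T^{i}=(xT)^{i}\in\mathcal{D}$ and, by differential saturation applied to the order-$i$ operator $\tfrac{1}{i!}\partial_{x}^{\,i}\in\Diff_{W}^{i}$ (here characteristic zero is essential), $\tfrac{1}{i!}\partial_{x}^{\,i}f\cdot T^{a-i}\in\mathcal{D}$; and for $i\geq\lceil a\rceil$ because $x^{\lceil a\rceil}T^{\lceil a\rceil}\in\mathcal{D}$ forces $x^{\lceil a\rceil}\in\mathcal{D}_{\lceil a\rceil}\subseteq\mathcal{D}_{a}$ by property~(3) of \ref{DefQReesAlg}, whence $\tfrac{(-x)^{i}}{i!}\partial_{x}^{\,i}f\cdot T^{a}=\bigl(\tfrac{(-1)^{i}}{i!}\,x^{\,i-\lceil a\rceil}\partial_{x}^{\,i}f\bigr)\,x^{\lceil a\rceil}T^{a}\in\mathcal{D}$. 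These two ranges cover all $i\geq 0$.

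It remains to pass from partial sums to $g_{0}$. The $N$-th partial sum $s_{N}$ satisfies $s_{N}T^{a}\in\mathcal{D}$ and $g_{0}-s_{N}\in\mathfrak{m}_{\xi}^{N+1}$, so $g_{0}\in\bigcap_{N}(\mathcal{D}_{a}+\mathfrak{m}_{\xi}^{N+1})$; since $\mathcal{D}_{a}$ is $\mathfrak{m}_{\xi}$-adically closed in the Noetherian local ring $\OO_{U,\xi}$ (Krull), this intersection is $\mathcal{D}_{a}$, so $g_{0}T^{a}\in\mathcal{D}$. As $g_{0}\in\OO_{V}$, this gives $\pi(f_{j})T^{a_{j}}\in\mathcal{D}\cap\GrQ(\OO_{V})=\mathcal{R}_{V}(\mathcal{D})$ at the stalk, hence over a suitable neighborhood after shrinking $U$ so that the finitely many restricted generators lie in $\mathcal{D}$; this yields $\Coeff_{V}(\mathcal{J}|_{U})\subseteq\mathcal{R}_{V}(\mathcal{D})$ and the desired equality.

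I expect the main obstacle to be exactly this convergence/descent step, and above all the twofold use of characteristic zero: once in \ref{ThContMaxLocal} to guarantee $xT\in\mathcal{D}$, and once in the Taylor identity, where the divided operators $\tfrac{1}{i!}\partial_{x}^{\,i}$ must be genuine differential operators of order $i$. Both ingredients fail in positive characteristic, which is precisely the phenomenon behind the inequality in example~\ref{EjemElimina}.
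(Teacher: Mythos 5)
Your proof is correct, and its skeleton is the same as the paper's: both obtain \( V \) and the transversal coordinate (your \( x \), the paper's \( z \)) from Theorem~\ref{ThContMaxLocal}, both observe that \( \mathcal{R}_{V}(\Diff(\mathcal{J}|_{U}))\subset\Coeff_{V}(\mathcal{J}|_{U}) \) is formal, and both prove the reverse inclusion using exactly two facts: \( \Diff(\mathcal{J}) \) is saturated under \( \partial_{x} \), and \( xT\in\Diff(\mathcal{J}) \). Where you genuinely diverge is the execution of the key step. The paper passes to the completion \( \hat{\OO}_{W,\xi}=k'[[z,x_{2},\ldots,x_{d}]] \), writes the \emph{finite} expansion \( f=a_{0}(x)+a_{1}(x)z+\cdots+a_{b-1}(x)z^{b-1}+a_{b}(x,z)z^{b} \), and proves \( a_{i}(x)T^{b-i}\in\Diff(\hat{\mathcal{J}}) \) by descending induction on \( i \), differentiating \( i \) times and subtracting the already-handled higher coefficients; since the expansion has a remainder term, no convergence issue ever arises. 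You instead use the closed-form inversion \( f|_{x=0}=\sum_{i}\frac{(-x)^{i}}{i!}\partial_{x}^{i}f \), verify term by term that each summand lies in \( \mathcal{D}_{a} \) (splitting the range at \( \lceil a\rceil \), which is the correct way to handle rational degrees --- the paper tacitly takes \( b \) integral), and then replace the completion by Krull's intersection theorem \( \bigcap_{N}(\mathcal{D}_{a}+\mathfrak{m}_{\xi}^{N+1})=\mathcal{D}_{a} \) in \( \OO_{U,\xi} \). What your route buys: you never leave the (\'etale) local ring, so there is no descent from the formal equality \( \Coeff_{\hat{V}}=\mathcal{R}_{\hat{V}} \) back to an \'etale neighborhood (a step the paper leaves implicit), and you correctly start from elements of \( \Diff(\mathcal{J}) \) rather than of \( \mathcal{J} \) (the paper writes \( fT^{b}\in\hat{\mathcal{J}} \), though \( \Coeff \) is built from the saturation; harmless, since the argument only uses saturation). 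What the paper's route buys: the finite expansion with remainder avoids any appeal to \( \mathfrak{m} \)-adic closedness, making the verification purely algebraic and induction-length finite. One small point to make explicit in your write-up: the congruence \( g_{0}-s_{N}\in\mathfrak{m}_{\xi}^{N+1} \) is first an identity in the completion, and descends to \( \OO_{U,\xi} \) because \( (x)^{N+1}\hat{\OO}_{U,\xi}\cap\OO_{U,\xi}=(x)^{N+1} \) by faithful flatness.
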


\begin{proof}
By theorem~\ref{ThContMaxLocal} there is an equation \( 
z\in\OO_{W,\xi} \) with \( zT\in\Diff(\mathcal{J}) \).
Consider \( x_{2},\ldots,x_{d}\in\OO_{W,\xi} \) such that \( 
z,x_{2},\ldots,x_{d} \) is a regular system of parameters.
Set \( \hat{\mathcal{J}}=\mathcal{J}\GrQ(\hat{\OO}_{W,\xi}) \).
We will prove that
\begin{equation*}
    \Coeff_{\hat{V}}(\hat{\mathcal{J}})=
    \mathcal{R}_{\hat{V}}(\Diff(\hat{\mathcal{J}}))
\end{equation*}
where \( V \) is the hypersurface defined by \( z=0 \).
Note that we are considering \( \hat{\OO}_{W,\xi} \) as the power 
series ring \( k'[[z,x_{2},\ldots,x_{d}]] \) and \( 
\OO_{\hat{V},\xi} \) as the power series ring \( 
k'[[x_{2},\ldots,x_{d}]] \), where \( k'\supset k \) is the residual 
field at \( \xi \). The retraction to \( \hat{V} \) corresponds to 
the inclusion of those power series rings.

The inclusion 
\( \mathcal{R}_{\hat{V}}(\Diff(\hat{\mathcal{J}})) \subset
\Coeff_{\hat{V}}(\hat{\mathcal{J}}) \) is now clear.

Assume that \( fT^{b}\in\hat{\mathcal{J}} \). We 
can express \( f \) as
\begin{equation*}
    f=a_{0}(x)+a_{1}(x)z+a_{2}(x)z^{2}+\cdots+a_{b-1}(x)z^{b-1}+a_{b}(x,z)z^{b}
\end{equation*}
where \( a_{i}(x)\in k'[[x_{2},\ldots,x_{d}]] \), for \( 
i=0,\ldots,b-1 \) and \( a_{b}(x,z)\in k'[[z,x_{2},\ldots,x_{d}]] \).
Note that \( a_{0}(x)T^{b}\in \Coeff_{\hat{V}}(\hat{\mathcal{J}}) \).

It is enough to prove that \( a_{0}(x)T^{b}\in \Diff(\hat{\mathcal{J}}) \).
In fact we will see by descending induction that \( 
a_{i}(x)T^{b-i}\in \Diff(\hat{\mathcal{J}}) \), for \( i=0,\ldots,b-1 \).

Recall that \( zT\in\Diff(\hat{\mathcal{J}}) \) and then
\begin{equation*}
    \dern{f}{z}{b-1}=(b-1)!(a_{b-1}(x)+b a_{b}(x,z)z)
    \qquad \Longrightarrow \qquad
    a_{b-1}(x)T\in \Diff(\hat{\mathcal{J}})
\end{equation*}
Fix \( i<b-1 \), assume that \( 
a_{j}(x)T^{b-j}\in\Diff(\hat{\mathcal{J}}) \) for \( j=i+1,\ldots,b-1 \).
It follows from the expression
\begin{equation*}
    \dern{f}{z}{i}=i!\left(a_{i}(x)+
    \sum_{j=i+1}^{b}\binom{j}{i}a_{j}(x)z^{j-i}
    \right)
\end{equation*}
that \( a_{i}(x)T^{b-i}\in\Diff(\hat{\mathcal{J}}) \), as required.
\end{proof}

\section{Algorithm of resolution in characteristic zero case.}
\label{SeccAlgorithm}

Along this section we assume that the characteristic of the field \( 
k \) is zero.
\medskip

We will describe here an algorithm of resolution for \( \mathbb{Q} 
\)-Rees algebras. It is inspired in \cite{EncinasHauser2002}.

\begin{definition} \label{DefResolQRA}
Let \( \mathcal{J} \) be a \( \mathbb{Q} \)-Rees algebra over \( W 
\). Assume that \( E \) is a set of hypersurfaces of \( W \) having 
only normal crossings.

A resolution of \( \mathcal{J} \) over \( (W,E) \) is a sequence of 
monoidal transformations:
\begin{equation} \label{SeqResolQRA}
    (W,E)=(W_{0},E_{0})\longleftarrow (W_{1},E_{1})\longleftarrow 
    \cdots \longleftarrow (W_{N},E_{N})
\end{equation}
such that, for \( i=0,\ldots,N-1 \)
\begin{itemize}
    \item  If \( \mathcal{J}_{i} \) is the transform of \( 
    \mathcal{J} \) in \( W_{i} \) then \( W_{i+1}\to W_{i} \) is the 
    monoidal transformation with center \( C_{i}\subset 
    \Sing(\mathcal{J}_{i}) \),
    
    \item \( C_{i} \) has normal crossings with \( E_{i} \),

    \item  \( E_{i+1} \) consists of all strict transforms of \( 
    E_{i} \) and the exceptional divisor of \( W_{i+1}\to W_{i} \) and
    
    \item \( \Sing(\mathcal{J}_{N})=\emptyset \).
\end{itemize}
\end{definition}
A log-resolution of an ideal \( J\subset\OO_{W} \) can be achieved by 
a resolution of the \( \mathbb{Q} \)-Rees algebra generated by \( JT \).

We will construct the sequence \ref{SeqResolQRA} inductively on the
number of blowing-ups and the dimension \( d \) of \( W \).

\begin{parrafo}\label{PropSeqFc}
At every step we will define an upper-semi-continuous function
\(
\Fc_{i}^{(d)}:W_{i}\to \Lambda \) where \(
\Lambda=(\QPlus\times\mathbb{N})^{\mathbb{N}} \) is ordered lexicographically.
The function \( \Fc_{i}^{(d)} \) will depend on the previous steps, say
the functions \( \Fc_{0}^{(d)},\ldots,\Fc_{i-1}^{(d)} \).

In fact the situation is local.
If \( \xi_{i}\in W_{i} \), the definition of the value 
\( \Fc_{i}(\xi_{i}) \) is local on the sequence \ref{SeqResolQRA}.
It depends only on the values of \(
\Fc_{0}(\xi_{0}),\ldots,\Fc_{i-1}(\xi_{i-1}) \), where \(
\xi_{0},\ldots,\xi_{i-1} \) are the images of \( \xi_{i} \) at \(
W_{0},\ldots,W_{i-1} \), respectively, and on the stalks \( 
\mathcal{J}_{0,\xi_{0}},\ldots,\mathcal{J}_{i-1,\xi_{i-1}} \).

Each center \( C_{i}\subset W_{i} \) will be the set of points where
the function \( \Fc_{i}^{(d)} \) is maximum:
\begin{equation*}
    C_{i}=\MaxB{\Fc_{i}^{(d)}}=\{\xi\in W_{i}\mid
    \Fc_{i}^{(d)}(\xi)=\max\Fc_{i}^{(d)}\}
\end{equation*}
Moreover, it can be proved that sequence \ref{SeqResolQRA} together with 
functions \( \Fc \) have the following properties:
\begin{enumerate}
    \item  \( C_{i}=\MaxB{\Fc_{i}^{(d)}} \),

    \item  \( \max\Fc_{0}^{(d)}>\max\Fc_{1}^{(d)}>\cdots 
    \max\Fc_{N-1}^{(d)} \) and

    \item  if \( \xi_{i}\in W_{i}\setminus C_{i} \) then \( \xi_{i} \) 
    identifies with a point \( \xi_{i+1}\in W_{i+1} \) and
    \( \Fc_{i}^{(d)}(\xi_{i})=\Fc_{i+1}^{(d)}(\xi_{i+1}) \).
\end{enumerate}
\end{parrafo}

\begin{parrafo} \label{FuntorProp}
We we will require also a stability property with smooth morphisms.

Let \( \mathcal{J} \) be a \( \mathbb{Q} \)-Rees algebra over \( 
(W,E) \), let \( W'\to W \) be a smooth morphism. Set \( E' \) to be the 
set of hypersurfaces of \( W' \) obtained by
the pullback of \( E \).
Set \( \mathcal{J}' \) to be the \( \mathbb{Q} \)-Rees algebra 
obtained also by pullback.

In this situation we have two sequences, say the resolutions of \( 
\mathcal{J} \) and \( \mathcal{J}' \).

Denote the resolution of \( \mathcal{J} \):
\begin{equation}
    \begin{array}{ccccccc}
        (W_{0},E_{0}) & \longleftarrow & (W_{1},E_{1}) & 
	\longleftarrow & \cdots & \longleftarrow & (W_{N},E_{N})  \\
        \mathcal{J}_{0} &  & \mathcal{J}_{1} &  &  &  & \mathcal{J}_{N}  \\
        \Fc_{0}^{(d)}(\mathcal{J}) &  & \Fc_{1}^{(d)}(\mathcal{J}) &  
	&  &  & 
    \end{array}
    \label{EqFuncResol1}
\end{equation}
where \( \Fc_{i}^{(d)}(\mathcal{J}):W_{i}\to\Lambda \), \( 
i=0,\ldots,N-1 \), are the functions associated to the resolution of 
\( \mathcal{J} \).

On the other hand denote the resolution of \( \mathcal{J}' \):
\begin{equation}
    \begin{array}{ccccccc}
        (W'_{0},E'_{0}) & \longleftarrow & (W'_{1},E'_{1}) & 
	\longleftarrow & \cdots & \longleftarrow & (W'_{N},E'_{N'})  \\
        \mathcal{J}'_{0} &  & \mathcal{J}'_{1} &  &  &  & 
	\mathcal{J}'_{N'}  \\
        \Fc_{0}^{(d')}(\mathcal{J}') &  & \Fc_{1}^{(d')}(\mathcal{J}') &  
	&  &  & 
    \end{array}
    \label{EqFuncResol2}
\end{equation}
where \( \Fc_{i}^{(d')}(\mathcal{J}'):W'_{i}\to\Lambda \), \( 
i=0,\ldots,N'-1 \), are the functions associated to the resolution of 
\( \mathcal{J'} \).

The stability property says that the sequence \ref{EqFuncResol2} is 
the pullback, via \( W'\to W \), of the sequence \ref{EqFuncResol1}; 
and functions take the same values
\begin{equation*}
    \Fc_{i}^{(d')}(\mathcal{J}')(\xi')= 
    \Fc_{i}^{(d)}(\mathcal{J})(\xi)
\end{equation*}
for any \( \xi'\in W'_{i} \) and any \( i=0,1,\ldots,N'-1 \).
Where \( \xi'\in W'_{i} \) maps to \( \xi\in W_{i} \).
\end{parrafo}

\begin{parrafo}
\textbf{Dimension one case.}
If \( \dim{W}=1 \), the singular locus \( \mathcal{J} \) is a finite
number of points in \( W \).
We set \( \Fc_{0}^{(1)}=\ord(\mathcal{J}) \).
The blowing-up with center \( C_{0}=\MaxB{\Fc_{0}^{(1)}} \), \( W_{1}\to 
W_{0}=W \), is an isomorphism, but the transform \( \mathcal{J}_{1} \)
is a different \( \mathbb{Q} \)-Rees algebra.
Note that \( \mathcal{J}_{1}=I(C_{0})^{-1}\mathcal{J}^{*} \)
and if \( \xi\in C_{0} \) then \( 
\ord(\mathcal{J}_{1})(\xi)=\ord(\mathcal{J}_{0})(\xi)-1 \).

If \( \Sing(\mathcal{J}_{1})\neq\emptyset \) then \( 
\max\Fc_{1}^{(1)}\geq 1 \). Set \( 
\Fc_{1}^{(1)}=\ord(\mathcal{J}_{1}) \) and continue with this 
procedure.
It is easy to prove that we obtain a sequence as in \ref{SeqResolQRA} 
with the required properties in \ref{PropSeqFc}.
\end{parrafo}

\begin{parrafo} \label{HipotInducdim}
In what follows we will fix a dimension \( d>1 \) and assume that we 
have constructed resolution of \( \mathbb{Q} \)-Rees algebras over 
varieties of dimension \( d-1 \). The constructed procedure satisfies 
properties in \ref{DefResolQRA} and \ref{PropSeqFc}, and
also stability property \ref{FuntorProp} for smooth morphisms of 
relative dimension zero.
\end{parrafo}

\begin{parrafo} \label{Step0}
\textbf{Initial step.}
Fix a point \( \xi_{0}\in\Sing(\mathcal{J}_{0}) \).
We construct the value \( \Fc_{0}^{(d)}(\xi_{0}) \) as follows:

Set \( \omega=\omega_{\xi_{0},0}^{(d)}=\ord(\mathcal{J}_{0})(\xi_{0}) \) and set
\begin{equation*}
    \mathcal{I}_{\xi_{0},0}=\mathcal{J}_{0},\qquad
    \mathcal{P}_{\xi_{0},0}= 
    \mathcal{I}_{\xi_{0},0}^{\frac{1}{\omega}}, 
    \qquad \mathcal{T}_{\xi_{0},0}=\mathcal{P}_{\xi_{0},0}\odot 
    \mathcal{E}_{\xi_{0},0}
\end{equation*}
where \( \mathcal{E}_{\xi_{0},0} \) is the \( \mathbb{Q} \)-Rees 
algebra generated by \( \{I(H)T\mid H\in E_{0},\ \xi_{0}\in H\} \).

The \( \mathbb{Q} \)-Rees algebra \( \mathcal{P}_{\xi_{0},0} \) has
order one at \( \xi_{0} \).  We may consider an open neighborhood of \(
\xi_{0} \) such that the algebra  \( \mathcal{P}_{\xi_{0},0} \) is simple.  By \ref{ThContMaxLocal}
we could consider a more suitable neighborhood \( U \) in order to choose a 
smooth hypersurface \( W_{0}^{(d-1)}\subset U \) satisfying \ref{EqContMaxProp}.
Set \( \mathcal{J}_{\xi_{0},0}^{(d-1)}= 
\Coeff_{W_{0}^{(d-1)}}(\mathcal{T}_{\xi_{0},0}) \).

If \( \mathcal{J}_{\xi_{0},0}^{(d-1)}=0 \) then we set \( 
\Fc_{0}^{(d-1)}=\infty \).

If \( \mathcal{J}_{\xi_{0},0}^{(d-1)}\neq 0 \) then,
by induction (\ref{HipotInducdim}),
the value \( \Fc_{0}^{(d-1)}(\xi_{0}) \) associated to 
\( \mathcal{J}_{\xi_{0},0}^{(d-1)} \) and \( 
(W_{0}^{(d-1)},E_{0}^{(d-1)}) \) is already defined, where we set
\( E_{0}^{(d-1)}=\emptyset \).

In any case, we may set
\begin{equation*}
    \Fc_{0}^{(d)}(\xi_{0})=
    (\omega_{\xi_{0},0}^{(d)},0,\Fc_{0}^{(d-1)}(\xi_{0}))
\end{equation*}
where the second component is \( 0\) because at the beginning there
are no exceptional divisors.  We will define this component in \ref{DefFuncN}.

The value \( \Fc_{0}^{(d)}(\xi_{0}) \) is well-defined and does not 
depend on the choice of the hypersurface \( V \). This follows from 
\ref{ThJarek} and property \ref{FuntorProp} applied to \( V \) and 
smooth morphisms of relative dimension zero.

With this procedure we define a function \( 
\Fc_{0}^{(d)}:W_{0}\to\Lambda \). The upper-semicontinuity follows by 
the upper-semicontinuity of functions \( \ord \) and \( \Fc_{0}^{(d-1)} \).

The closed set \( C_{0}=\MaxB\Fc_{0}^{(d)} \) is smooth by the 
inductive assumption on the dimension \( d-1 \).

On the other hand, \( C_{0} \) has only normal crossings with \( 
E_{0} \) by the definition of \( \mathcal{T}_{0} \).
\end{parrafo}

\begin{parrafo} \label{InducStep_i}
\textbf{Step i.}
Assume now that \( d=\dim{W}>1 \).  And suppose that
we have already constructed the first \( i \) steps of the sequence
\ref{SeqResolQRA}.
\begin{equation} \label{EqSeqResoli}
    (W_{0},E_{0})\longleftarrow (W_{1},E_{1})\longleftarrow \cdots 
    \longleftarrow (W_{i},E_{i})
\end{equation}
and the functions
\begin{equation*}
    \Fc_{0}^{(d)},\ \Fc_{1}^{(d)},\ldots,\ \Fc_{i-1}^{(d)}
\end{equation*}
satisfying properties in \ref{DefResolQRA} and \ref{PropSeqFc}, and 
also stability property \ref{FuntorProp} for smooth morphisms of 
relative dimension zero. 

For any \( j=0,\ldots,i-1 \),
the algebra \( \mathcal{J}_{j} \) is the transform of \( 
\mathcal{J}_{0} \) in \( W_{j} \).
We denote by \( E_{j,0} \) to be the set of strict transforms of \( E_{0} 
\) in \( W_{j} \). Note that \( E_{j,0}\subset E_{j} \).
We set the non-monomial part of \( \mathcal{J}_{j} \)
\begin{equation} \label{EqDefI}
    \mathcal{I}_{j}=(E_{j}\setminus E_{j,0})^{-1}\mathcal{J}_{j}
\end{equation}
For any point \( \xi_{i}\in W_{i} \) we denote \( \xi_{j}\in W_{j} \), 
\( j=0,\ldots,i-1 \) to be the image of \( \xi_{i} \) in \( W_{j} \).

By construction the first coordinate of the \( \Fc_{j}^{(d)}(\xi_{j}) 
\) is \( \omega_{\xi_{j},j}^{(d)}=\ord(\mathcal{I}_{j})(\xi_{j}) \).
We have the chain of inequalities
\begin{equation*}
    \omega_{\xi_{0},0}^{(d)}\geq \omega_{\xi_{1},1}^{(d)}\geq \cdots 
    \geq \omega_{\xi_{i-1},i-1}^{(d)}
\end{equation*}
\end{parrafo}

\begin{parrafo}
With the situation as in \ref{InducStep_i} we want to define the 
function \( \Fc_{i}^{(d)}:W_{i}\to\Lambda \).

Fix a point \( \xi_{i}\in W_{i} \).
Set \( I_{i}=(E_{i}\setminus E_{i,0})^{-1}\mathcal{J}_{i} \) to be the non-monomial 
part and set \( 
\omega_{\xi_{i},i}^{(d)}=\ord(\mathcal{I}_{i})(\xi_{i}) \). We have 
the inequalities
\begin{equation*}
    \omega_{\xi_{0},0}^{(d)}\geq \omega_{\xi_{1},1}^{(d)}\geq \cdots 
    \geq \omega_{\xi_{i-1},i-1}^{(d)}\geq \omega_{\xi_{i},i}^{(d)}
\end{equation*}
First note that if \( \omega_{\xi_{i},i}^{(d)}=0 \) then the algebra 
\( \mathcal{J}_{i} \) reduce to a monomial and it is easy to define 
\( \Fc_{i}^{(d)}(\xi_{i}) \) in order to enlarge \ref{EqSeqResoli} 
(locally at \( \xi_{i} \)) to a resolution.

So that we may assume \( \omega_{\xi_{i},i}^{(d)}>0 \).
Set \( j_{0} \) the minimum index such that
\begin{equation*}
    \omega_{\xi_{j_{0}},j_{0}}^{(d)}=\cdots
     = \omega_{\xi_{i-1},i-1}^{(d)}= \omega_{\xi_{i},i}^{(d)}
\end{equation*}
Denote by \( E_{i,j_{0}} \) to be the set of strict transforms of \( 
E_{j_{0}} 
\) in \( W_{i} \), \( E_{i,j_{0}}\subset E_{i} \).
Define
\begin{equation} \label{DefFuncN}
    n_{\xi_{i},i}=\# \{H\in E_{i,j_{0}}\mid \xi_{i}\in E_{i,j_{0}}\}
\end{equation}
If \( \omega=\omega_{\xi_{i},i}^{(d)} \) we set
\begin{equation*} \label{EqDefP_T}
    \mathcal{P}_{\xi_{i},i}=
    \mathcal{I}_{i}^{\frac{1}{\omega}}\odot \mathcal{J}_{i},
    \qquad
    \mathcal{T}_{\xi_{i},i}=
    \mathcal{P}_{x_{i},i}\odot \mathcal{E}_{\xi_{i},i}
\end{equation*}
where \( \mathcal{E}_{\xi_{i},i} \) is the \( \mathbb{Q} \)-Rees 
algebra generated by \( \{I(H)T\mid H\in E_{i,j_{0}}, \xi_{i}\in H\} \).

The order of \( \mathcal{P}_{\xi_{i},i} \) at \( \xi_{i} \) is one, 
so that there is an open neighborhood \( U \) where \( 
\mathcal{P}_{\xi_{i},i} \) is simple. By \ref{ThContMaxLocal} we 
could shrink \( U \) and choose a smooth hypersurface \( 
W_{i}^{(d-1)}\subset U \) such that \( I(W_{i}^{(d-1)})T\subset 
\Diff(\mathcal{P}_{\xi_{i},i})|_{U} \).

Set \( E_{i}^{(d-1)}=E_{i}\setminus E_{i,j_{0}} \) and 
\( \mathcal{J}_{i}^{(d-1)}= 
\Coeff_{W_{i}^{(d-1)}}(\mathcal{T}_{\xi_{i},i}) \).

If \( \mathcal{J}_{i}^{(d-1)}=0 \) then we set \( 
\Fc_{i}^{(d-1)}(\xi_{i})=\infty \).

If \( \mathcal{J}_{i}^{(d-1)}\neq 0 \) then by induction hypothesis 
we may consider the resolution of \( \mathcal{J}_{j_{0}}^{(d-1)} \) 
in \( W_{j_{0}}^{(d-1)} \). And we set
\begin{equation*}
    \Fc_{i}^{(d)}(\xi_{i})=(w_{\xi_{i},i}^{(d-1)}, n_{\xi_{i},i}, 
    \Fc_{i}^{(d-1)}(\xi_{i}))
\end{equation*}

The function \( \Fc_{i}^{(d)} \) is upper-semi-continuous by 
construction and the upper-semi-continuity of \( \Fc_{i}^{(d-1)} \).

In order to prove that the center \( C_{i}=\MaxB\Fc_{i}^{(d)} \) is 
smooth, note that if \( \Fc_{i}^{(d-1)}(C_{i})=\infty \) then \( 
C_{i}=W_{i}^{(d-1)} \). If \( \Fc_{i}^{(d-1)}(C_{i})\neq\infty \) 
then \( C_{i}=\MaxB\Fc_{i}^{(d-1)} \) and it is smooth by induction 
hypothesis.
\end{parrafo}

\begin{parrafo} \label{EquivSameResol}
Given a \( \mathbb{Q} \)-Rees algebra \( \mathcal{J} \) we have 
constructed a sequence \ref{SeqResolQRA} and functions \( 
\Fc_{i}^{(d)} \), \( i=0,\ldots,N-1 \) as in \ref{PropSeqFc}.

Assume that we have two equivalent \( \mathbb{Q} \)-Rees algebras \(
\mathcal{J} \) and \( \mathcal{J}' \) (\ref{DefIntEquivQRA}).  Then
associated to \( \mathcal{J} \) and \( \mathcal{J}' \) are two
sequences as in \ref{SeqResolQRA}, say:
\begin{gather} \label{TwoSeqEquiv1}
    (W,E)=(W_{0},E_{0})\longleftarrow (W_{1},E_{1})\longleftarrow 
    \cdots \longleftarrow (W_{N},E_{N}) \\
    \label{TwoSeqEquiv2}
    (W,E)=(W'_{0},E'_{0})\longleftarrow (W'_{1},E'_{1})\longleftarrow 
    \cdots \longleftarrow (W'_{N'},E'_{N'}) 
\end{gather}
We also have functions \(
\Fc_{0}^{(d)},\ldots,\Fc_{N}^{(d)} \), associated to \( \mathcal{J}
\), and \( {\Fc'_{0}}^{(d)},\ldots,{\Fc'_{N'}}^{(d)} \), associated to
\( \mathcal{J}' \).

We claim that both sequences are equal and moreover the functions are 
equal: \( N=N' \) and \( \Fc_{i}^{(d)}={\Fc'_{i}}^{(d)} \), \( 
i=0,\ldots,N-1 \).

To prove our claim is enough to see that all operations involved are 
stable by integral closure:

We may proceed by induction on \( i=0,\ldots,N-1 \). If \(
\mathcal{J}_{j} \) and \( \mathcal{J}'_{j} \), \( j=0,\ldots,i \) are
equivalent and the sequences in \ref{TwoSeqEquiv1} and
\ref{TwoSeqEquiv2} coincide for the first \( i \) steps, then
\begin{itemize}
    \item  the algebras \( \mathcal{I}_{j} \) and \( 
    \mathcal{I}'_{j} \) defined as in \ref{EqDefI} are equivalent,

    \item  the algebras \( \mathcal{P}_{\xi_{i},i} \), \( 
    \mathcal{T}_{\xi_{i},i} \) and \( \mathcal{P}'_{\xi_{i},i} \), \( 
    \mathcal{T}'_{\xi_{i},i} \) defined as in \ref{EqDefP_T} are, 
    respectively, equivalent.
\end{itemize}
Finally the centers \( C_{i}=C'_{i} \) coincide and the transforms \(
\mathcal{J}_{i+1} \) and \( \mathcal{J}'_{i+1} \) are equivalent.
\end{parrafo}

\def\cprime{$'$}
\providecommand{\bysame}{\leavevmode\hbox to3em{\hrulefill}\thinspace}
\providecommand{\MR}{\relax\ifhmode\unskip\space\fi MR }
\providecommand{\MRhref}[2]{%
  \href{http://www.ams.org/mathscinet-getitem?mr=#1}{#2}
}
\providecommand{\href}[2]{#2}

\end{document}